\newcommand{\mypm}{\mathbin{\mathpalette\@mypm\relax}}
\newcommand{\@mypm}[2]{\ooalign{%
  \raisebox{.1\height}{$#1+$}\cr
  \smash{\raisebox{-.6\height}{$#1-$}}\cr}}
\newcommand{\N}{\mathbb{N}}
\let\OLDthebibliography\thebibliography
\renewcommand\thebibliography[1]{
  \OLDthebibliography{#1}
  \setlength{\parskip}{0pt}
  \setlength{\itemsep}{0pt plus 0.3ex}
}
\DeclareMathOperator{\rank}{rank}
\newcommand{\ignore}[1]{}
\newcommand{\cpsdrank}{\text{\rm cpsd-rank}}
\newcommand{\xib}[2]{{\xi_{#2}^{\mathrm{#1}}}}
\newcommand{\las}[2]{{\mathrm{las}_{#2}^{\mathrm{#1}}}}
\newcommand{\Cq}{C_{q}}
\newcommand{\Cqc}{C_{qc}}
\newcommand{\Cloc}{C_{loc}}
\renewcommand{\S}{\mathcal{S}}
\newcommand{\R}{\mathbb{R}}
\newcommand{\C}{\mathbb{C}}
\newcommand{\T}{{\sf T}}
\DeclareMathOperator{\Tr}{Tr}
\renewcommand{\sup}{\mathrm{sup}}
\renewcommand{\min}{\mathrm{min}}
\renewcommand{\max}{\mathrm{max}}
\renewcommand{\epsilon}{\varepsilon}
\newtheorem{defin}{Definition}[section]
\newtheorem{definition}[defin]{Definition}
\newtheorem{proposition}[defin]{Proposition}
\newtheorem{theorem}[defin]{Theorem}
\newtheorem{remark}[defin]{Remark}
\newtheorem{lemma}[defin]{Lemma}
\newtheorem*{claim*}{Claim}
\newtheorem*{conjecture*}{Conjecture}
\theoremstyle{definition}
\DeclareMathOperator{\cpsdr}{cpsd-rank}
\newcommand{\cx}{[\mathbf{x}]}
\newcommand{\ncx}{\langle {\bf x}\rangle}
\newcommand{\bx}{\mathbf x}
\newcommand{\by}{\mathbf y}
\newcommand{\bX}{\mathbf X}
\newcommand{\MM}{{\mathcal M}}
\newcommand{\MI}{{\mathcal I}}
\newcommand{\MB}{{\mathcal B}}
\newcommand{\MA}{{\mathcal A}}
\date{}
\begin{document}

\title{Bounds on entanglement dimensions and quantum graph parameters via noncommutative polynomial optimization}
\author{
Sander Gribling\thanks{CWI and QuSoft, Amsterdam, the Netherlands. Supported by the Netherlands Organization for Scientific Research, grant number 617.001.351. {\tt gribling@cwi.nl}}
\and
David de Laat\thanks{CWI and  QuSoft, Amsterdam, the Netherlands. Supported by the Netherlands Organization for Scientific Research, grant number 617.001.351, and by the ERC Consolidator Grant QPROGRESS 615307. {\tt mail@daviddelaat.nl}} 
\and
Monique Laurent\thanks{CWI and  QuSoft, Amsterdam, and Tilburg University, Tilburg,  the Netherlands. {\tt laurent@cwi.nl}}}


\maketitle

\begin{abstract}
In this paper we study optimization problems related to bipartite quantum correlations
using techniques from tracial noncommutative polynomial optimization. First we consider the problem
of finding the minimal entanglement dimension of such correlations. We construct
a hierarchy of semidefinite programming lower bounds and show convergence to a
new parameter: the minimal average entanglement dimension, which measures the
amount of entanglement needed to reproduce a quantum correlation when access to
shared randomness is free. Then we study optimization problems over synchronous
quantum correlations arising from quantum graph parameters. We introduce semidefinite programming hierarchies and unify existing bounds on quantum chromatic
and quantum stability numbers by placing them in the framework of tracial optimization.
\end{abstract}

\section{Introduction}

\subsection{Bipartite quantum correlations} \label{sec:quantumcorrelations}

One of the distinguishing features of quantum mechanics is quantum entanglement, which allows for nonclassical correlations between spatially separated parties. 
In this paper we consider the problems of quantifying the advantage entanglement can bring (first investigated through Bell inequalities in the seminal work~\cite{B64}) and quantifying the minimal amount of entanglement necessary for generating a given correlation (initiated in~\cite{Brunner08} and continued, e.g.,  in~\cite{PV08,WCD08,SVW16}). 

Quantum entanglement has been widely studied in the bipartite correlation setting (for a survey, see, e.g.,~\cite{PV16}). Here we have two parties, Alice and Bob, where Alice receives a question $s$ taken from a finite set $S$ and Bob receives a question $t$ taken from a finite set $T$. The parties do not know each other's questions, and after receiving the questions they do not communicate. Then, according to some predetermined protocol, Alice returns an answer $a$ from a finite set $A$ and Bob returns an answer $b$ from a finite set $B$. The probability that the parties answer $(a,b)$ to questions $(s,t)$ is given by a \emph{bipartite correlation} $P(a,b|s,t)$, which satisfies $P(a,b|s,t) \geq 0$ for all $(a,b,s,t)\in \Gamma$ and $\sum_{a,b} P(a,b|s,t) = 1$ for all $(s,t)\in S\times T$.
Throughout we set $\Gamma =A\times B \times S \times T$.
Which bipartite correlations $P=(P(a,b|s,t))\in \smash{\R^\Gamma}$  are possible depends on the additional resources  available to the two parties Alice and Bob. 

If the parties do not have access to additional resources, then the correlation is \emph{deterministic}, which means it is of the form $P(a,b|s,t) = P_A(a|s) \, P_B(b|t)$, with $P_A(a|s)$ and $\smash{P_B(b|t)}$ taking values in $\{0,1\}$ and $\sum_a P_A(a|s) = \sum_b P_B(b|t) = 1$ for all $s,t$. If the parties have access to local randomness, then $P_A$ and $P_B$ take values in $[0,1]$. If the parties have access to shared randomness, 
then the resulting correlation is a convex combination of deterministic correlations and is said to be a \emph{classical correlation}. The classical correlations form a polytope, denoted $\Cloc(\Gamma)$, whose 
valid inequalities are known as {\em Bell inequalities}~\cite{B64}.

We are interested in the quantum setting, where the parties have access to a shared quantum state on which they can perform measurements. 
The quantum setting can be modeled in different ways, leading to the so-called tensor and commuting models; see the discussion, e.g., in~\cite{Tsirelson06,NPA08,DLTW08}.

In the {\em tensor model},  Alice and Bob each have access to ``one half" of a finite dimensional \emph{quantum state}, which is modeled by a unit vector $\psi \in \C^d \otimes \C^d$.
Alice and Bob determine their answers by performing a measurement on their part of the state. Such a measurement is modeled by a {\em positive operator valued measure} (POVM), which consists of a set of $d \times d$ Hermitian positive semidefinite matrices labeled by the possible answers and summing to the identity matrix. If Alice uses the POVM $\{E_s^a\}_{a \in A}$ when she gets question $s \in S$ and Bob uses the POVM $\{F_t^b\}_{b \in B}$ when he gets question $t \in T$,  then the probability of obtaining the answers $(a,b)$ is given by
\begin{equation}\label{eqPtensor}
P(a,b|s,t) =  \mathrm{Tr}( (E_s^a \otimes F_t^b)  \psi \psi^*) = \psi^* (E_s^a \otimes F_t^b) \psi.
\end{equation}
If the state $\psi$ cannot be written as a single tensor product $\psi_A \otimes \psi_B$, then $\psi$ is \emph{entangled}, which means it can be used to produce a nonclassical correlation $P$.

A correlation of the above form~\eqref{eqPtensor} is a \emph{quantum correlation}, realizable in the tensor model in \emph{local dimension} $d$ (or in \emph{dimension} $d^2$). Let $\smash{\Cq^d(\Gamma)}$ be the set of such correlations and define
\[
\Cq(\Gamma)=\bigcup_{d\in \N} \Cq^{d}(\Gamma).
\]
Denote the smallest dimension needed to realize 
$P \in \Cq(\Gamma)$ in the tensor model by
\begin{equation}\label{eqDq}
D_q(P) = \min \big\{ d^2  : d \in \N, \, P \in \Cq^d(\Gamma)\big\}.
\end{equation}

The set $\Cq^1(\Gamma)$ contains the deterministic correlations. Hence, by Carath\'eodory's theorem,  $\Cloc(\Gamma)\subseteq \Cq^c(\Gamma)$ holds for $c =  |\Gamma|+1-|S||T|$; that is, quantum entanglement can be used as an alternative to shared randomness. If $A$, $B$, $S$, and $T$ all contain at least two elements, then Bell~\cite{B64}
shows the inclusion $\Cloc(\Gamma) \subseteq \Cq(\Gamma)$ is strict; that is, quantum entanglement can be used to obtain nonclassical correlations. 

The second commonly used model to define quantum correlations is the {\em commuting model} (or {\em relativistic field theory model}). 
Here a correlation $P\in \R^\Gamma$ is called a  \emph{commuting quantum correlation} if it is  of the form 
\begin{equation}\label{eqPcommute} 
P(a,b|s,t) =  \mathrm{Tr}(X_s^a Y_t^b \psi \psi^*) = \psi^* (X_s^a Y_t^b) \psi,
\end{equation}
where $\{X_s^a\}_a$ and $\{Y_t^b\}_b$ are POVMs consisting of bounded operators on a separable Hilbert space $H$, satisfying $[X_s^a, Y_t^b] = X_s^a Y_t^b - Y_t^bX_s^a = 0$ for all $(a,b,s,t)\in\Gamma$, and where $\psi$ is a unit vector in $H$. Such a correlation is said to be realizable  in dimension $d = \mathrm{dim}(H)$ in the commuting model. Denote the set of such correlations by $\Cqc^d(\Gamma)$ and set $\Cqc(\Gamma) =\Cqc^\infty(\Gamma)$. The smallest dimension needed to realize a quantum correlation $P \in \Cqc(\Gamma)$ is given by
\begin{equation}\label{eqDqc}
D_{qc}(P) = \min \big\{ d \in \N \cup \{\infty\} : P \in \Cqc^d(\Gamma)\big\}.
\end{equation}
We have $\Cq^d(\Gamma) \subseteq \smash{\Cqc^{d^2}}(\Gamma)$, which follows by setting $X_s^a = E_s^a \otimes I$ and $Y_t^b = I \otimes F_t^b$. This shows 
$D_{qc}(P) \leq D_q(P)$ for all $P \in \Cq(\Gamma)$.

The minimum Hilbert space dimension in which a given quantum correlation $P$ can be realized quantifies the minimal amount of entanglement needed to represent  $P$. Computing $D_q(P)$ is NP-hard~\cite{Stark15}, so a natural  question is to find good lower bounds for the parameters $D_q(P)$ and $D_{qc}(P)$. A main contribution of this paper is proposing a  hierarchy of semidefinite programming lower bounds for these parameters.

As said above we have $\Cq^d(\Gamma) \subseteq \smash{\Cqc^{d^2}}(\Gamma)$.
Conversely, each finite dimensional commuting quantum correlation can be realized in the tensor model, although not necessarily in the same dimension~\cite{Tsirelson06} (see, e.g.,~\cite{DLTW08} for a proof).  This shows 
\[
\Cq(\Gamma) = \smash{\bigcup_{d\in \N}} \Cqc^{d}(\Gamma)\subseteq \Cqc(\Gamma).
\]
Using a direct sum construction one can show $\cup_{d\in \N} \Cqc^{d}(\Gamma)$ and $\Cqc(\Gamma)$ are convex. Whether the two sets $\Cq(\Gamma)$ and $\Cqc(\Gamma)$ coincide is known as Tsirelson's problem. In a recent breakthrough Slofstra~\cite{Slofstra17} showed that 
$\Cq(\Gamma)$ is not closed for $|A| \geq 8$, $|B| \geq 2$, $|S| \geq 184$, $|T| \geq 235$.
More recently it was shown in~\cite{DPP17} that the same holds for $|A| \geq 2$, $|B| \geq 2$, $|S| \geq 5$, $|T| \geq 5$.
Hence, for such $\Gamma$ there is a sequence $\{P_i\} \subseteq  \Cq(\Gamma)$ with $D_q(P_i) \to \infty$. Moreover, since $\Cqc(\Gamma)$ is closed~\cite[Prop.~3.4]{Fri12}, 
the inclusion $\Cq(\Gamma) \subseteq \Cqc(\Gamma)$ is strict, thus settling Tsirelson's problem.
Whether the closure of $\Cq(\Gamma)$ equals $\Cqc(\Gamma)$ for all $\Gamma$ is equivalent to Connes' embedding conjecture in operator theory~\cite{JNPPGSW,Oz12}.

Further variations on the above definitions are possible. For instance, we can consider a mixed state $\rho$ (a Hermitian positive semidefinite matrix $\rho$ with $\Tr(\rho)=1$) instead of a pure state $\psi$, where we replace the rank $1$ matrix $\psi\psi^*$ by $\rho$ in the above definitions. By convexity this does not change 
the sets $\Cq(\Gamma)$ and $\Cqc(\Gamma)$. It is shown in~\cite{SVW16} that this also does not change the parameter $D_q(P)$, but it is unclear whether or not $D_{qc}(P)$ might decrease. Another variation would be to use projection valued measures (PVMs) instead of POVMs, where the operators are projectors instead of positive semidefinite matrices. This again does not change the sets $\Cq(\Gamma)$ and $\Cqc(\Gamma)$~\cite{NC00}, but the dimension parameters can be larger when restricting to PVMs.

When the two parties have the same question sets ($S=T$) and the same answer sets ($A=B$), a bipartite correlation $P\in \R^\Gamma$ is called \emph{synchronous} if $P(a,b|s,s) = 0$ for all $s$ and $a \neq b$. 
The sets of synchronous (commuting) quantum correlations, denoted $C_{q,s}(\Gamma)$ and $C_{qc,s}(\Gamma)$, are rich enough, so that Connes' embedding conjecture still holds if and only if $\mathrm{cl}(C_{q,s}(\Gamma)) = C_{qc,s}(\Gamma)$ for all $\Gamma$~\cite[Thm.~3.7]{DP16}. The quantum graph parameters discussed in Section~\ref{intro: quantum graph parameters} will be defined through optimization problems over these sets. 

A matrix $M \in \R^{n \times n}$ is \emph{completely positive semidefinite} if there exist $d\in\N$ and Hermitian positive semidefinite matrices $X_1,\ldots,X_n \in \C^{d \times d}$ with $M = (\mathrm{Tr}(X_iX_j))$. The minimal such $d$ is its {\em completely positive semidefinite rank}, denoted $\cpsdrank(M)$.  Completely positive semidefinite matrices are used in~\cite{LP15} to model quantum graph parameters and the completely positive semidefinite rank is investigated in~\cite{PSVW16,GdLL17,PV17,GdLL17a}.
By combining  the proofs from~\cite{SV16} (see also~\cite{MR16}) and~\cite{PSSTW16} one can show the following link between synchronous correlations and completely positive semidefinite matrices.\footnote{See Appendix~\ref{sec:sync} for a proof.}

\begin{restatable*}{proposition}{propcorrelationsynchronous} 
\label{propcorrelation}
The smallest local dimension in which a synchronous quantum correlation $P$ can be realized is given by the completely positive semidefinite rank of the matrix $M_P$ indexed by $S \times A$ with entries $(M_P)_{(s,a),(t,b)} = P(a,b|s,t)$.
\end{restatable*} 

In~\cite{GdLL17a} we use techniques from tracial polynomial optimization to define a semidefinite programming hierarchy $\{\smash{\xi_r^\mathrm{cpsd}}(M)\}$ of lower bounds 
 on $\cpsdrank(M)$. By the above result this hierarchy gives lower bounds on the smallest local dimension in which a synchronous correlation can be realized in the tensor model. However, in~\cite{GdLL17a} we show that the hierarchy typically does not converge to $\cpsdrank(M)$ but instead (under a certain flatness condition) to a parameter $\smash{\xib{cpsd}{*}(M)}$, which can be seen as a block-diagonal version of the completely positive semidefinite rank. 

Here we use similar techniques, now exploiting the special structure of quantum correlations, to construct a hierarchy $\{\xib{q}{r}(P)\}$ of lower bounds on the minimal dimension $D_q(P)$ of any -- not necessarily synchronous -- quantum correlation $P$. The hierarchy converges (under flatness) to a parameter $\xib{q}{*}(P)$, and using the additional structure we can show that $\xib{q}{*}(P)$ is equal to an interesting parameter $A_q(P) \leq D_q(P)$. This parameter describes the minimal average entanglement dimension of a correlation when the parties have free access to shared randomness; see Section~\ref{sec:intro avendi}.

In the rest of the introduction we give a road map through the contents of the paper and state the main results. We will introduce the necessary background along the way.

\subsection{A hierarchy for the average entanglement dimension}\label{sec:intro avendi}

We are interested in the minimal entanglement dimension needed to realize a given correlation $P\in \Cq(\Gamma)$. 
If $P$ is deterministic or only uses local randomness, then $D_{q}(P)=D_{qc}(P)=1$. But other classical correlations (which use shared randomness) have $D_q(P) \geq D_{qc}(P) > 1$, which means the shared quantum state is used as a shared randomness resource. In~\cite{Brunner08} the concept of dimension witness is introduced, where a \emph{$d$-dimensional witness} is defined as a halfspace containing $\mathrm{conv}(\Cq^d(\Gamma))$, but not the full set $\Cq(\Gamma)$. As a measure of entanglement this suggests the parameter
\begin{equation} \label{eqBru}
\mathrm{inf} \Big\{ \max_{i \in [I]} D_q(P_i) :  I \in \N,\, \lambda \in \R_+^I, \, \sum_{i=1}^I \lambda_i = 1,\, P = \sum_{i=1}^I \lambda_i P_i, \, P_i \in \Cq(\Gamma) \Big\}. 
\end{equation}
Observe that, for a bipartite correlation $P$, this parameter is equal to $1$ if and only if $P$ is classical. Hence, it more closely measures the minimal entanglement dimension when the parties have free access to shared randomness. 
From an operational point of view,~\eqref{eqBru} can be interpreted as follows. Before the game starts the parties select a finite number of pure states $\psi_i$ ($i \in I$) (instead of a single one), in possibly different dimensions $d_i$, and POVMs $\{E_s^a(i)\}_a$, $\{F_t^b(i)\}_b$ for each $i \in I$ and $(s,t) \in S \times T$. 
As before, we assume that the parties cannot communicate after receiving their questions $(s,t)$, but now they do have access to shared randomness, which they use to decide on which state $\psi_i$ to use. The parties proceed to measure state $\psi_i$ using POVMs $\{E_s^a(i)\}_a$, $\{F_t^b(i)\}_b$, so that the probability of answers $(a,b)$ is given by the quantum correlation $P_i$. Equation~\eqref{eqBru} then asks for the largest dimension needed in order to generate $P$ when access to shared randomness is free.

It is not clear how to compute~\eqref{eqBru}. Here we propose a variation of~\eqref{eqBru}, and we provide a hierarchy of semidefinite programs that converges to it under flatness.
Instead of considering the largest dimension needed to generate $P$, we consider the \emph{average} dimension. That is,
we minimize $\sum_{i \in I} \lambda_i D_q(P_i)$ over all convex combinations $P = \sum_{i \in I} \lambda_i P_i$.
Hence, the \emph{minimal average entanglement dimension} is given by
\[
A_q(P) = \mathrm{inf} \Big\{ \sum_{i=1}^I \lambda_i D_q(P_i) :  I \in \N,\, \lambda \in \R_+^I, \, \sum_{i=1}^I \lambda_i = 1,\, P = \sum_{i=1}^I \lambda_i P_i, \, P_i \in \Cq(\Gamma) \Big\}
\]
in the tensor model. In the commuting model, $A_{qc}(P)$ is given by the same expression with $D_q(P_i)$ replaced by $D_{qc}(P_i)$. Observe that we need not replace $\Cq(\Gamma)$ by $\Cqc(\Gamma)$ since $D_{qc}(P) = \infty$ for any $P \in \Cqc(\Gamma) \setminus \Cq(\Gamma)$. 

It follows by convexity that for the above definitions it does not matter whether we use pure or mixed states. We show that for the average minimal entanglement dimension it also does not matter whether we use the tensor or commuting model. 

\begin{restatable*}{proposition}{propRelaAvg} \label{prop:relativistic average entanglement dimension}
For any $P\in \Cq(\Gamma)$ we have $A_q(P) = A_{qc}(P)$.
\end{restatable*}

We have $A_q(P) \le D_q(P)$ and $A_{qc}(P) \leq D_{qc}(P)$
 for $P \in \Cq(\Gamma)$, with equality if $P$ is an extreme point of $\Cq(\Gamma)$. Hence, we have $D_q(P) = D_{qc}(P)$ if $P$ is an extreme point of $\Cq(\Gamma)$. We show that the parameter $A_q(P)$ can be used to distinguish between classical and nonclassical correlations.

\begin{restatable*}{proposition}{propeqone} \label{prop:eqone}
For a correlation $P \in \R^\Gamma$ we have $A_q(P) = 1$ if and only if 
$P \in \Cloc(\Gamma)$.
\end{restatable*}

As mentioned before, there exist $\Gamma$ for which $\Cq(\Gamma)$ is not closed~\cite{Slofstra17,DPP17}, which implies the existence of a sequence $\{P_i\} \subseteq \Cq(\Gamma)$ such that $D_q(P) \to \infty$. We show this also implies the existence of such a sequence with $A_q(P_i) \to \infty$.

\begin{restatable*}{proposition}{propuniformbound} \label{propuniformbound}
If $\Cq(\Gamma)$ is not closed, there exists $\{P_i\} \subseteq  \Cq(\Gamma)$ with $A_q(P_i) \to \infty$.
\end{restatable*}

Using tracial polynomial optimization we construct a hierarchy $\{\xib{q}{r}(P)\}$ of lower bounds on $A_{qc}(P)$. For each $r \in \N$ this is a  semidefinite program, and for $r = \infty$ it is an infinite dimensional semidefinite program. We further define a (hyperfinite) variation $\xib{q}{*}(P)$ of $\xib{q}{\infty}(P)$ by adding a finite rank constraint, so that
\[
\xib{q}{1}(P) \leq \xib{q}{2}(P) \leq \ldots \leq \xib{q}{\infty}(P) \leq \xib{q}{*}(P) \leq A_{qc}(P).
\]
We do not know whether $\xib{q}{\infty}(P) = \xib{q}{*}(P)$ always holds; this question is related to Connes' embedding conjecture~\cite{KS08}.
First we show that we imposed enough constraints in the bounds $\xib{q}{r}(P)$ so that $\xib{q}{*}(P)=A_{qc}(P)$.
\begin{restatable*}{proposition}{propstarisaq}
For any $P\in \Cq(\Gamma)$ we have $\xib{q}{*}(P) = A_\mathrm{qc}(P)$.
\end{restatable*}

Then we show that the infinite dimensional semidefinite program $\xib{q}{\infty}(P)$ is the limit of the finite dimensional semidefinite programs.
\begin{restatable*}{proposition}{propconvergetinfty}
For any $P\in \Cq(\Gamma)$ we have $\xib{q}{r}(P)\to \xib{q}{\infty}(P)$ as $r\to \infty$.
\end{restatable*}

Finally we give a criterion under which finite convergence $\xib{q}{r}(P) = \xib{q}{*}(P)$ holds. The definition of flatness follows later in the paper; here we only note that it is an easy to check criterion given the output of the semidefinite programming solver.
\begin{restatable*}{proposition}{propqflat}
If $\xib{q}{r}(P)$ admits a $(\lceil r/3 \rceil+1)$-flat optimal solution, $\xib{q}{r}(P) = \xib{q}{*}(P)$.
\end{restatable*}

\subsection{Quantum graph parameters}\label{intro: quantum graph parameters}

Nonlocal games have been introduced in quantum information theory as abstract models  to quantify the power of entanglement, in particular,  in how much the sets $\Cq(\Gamma)$ and $\Cqc(\Gamma)$ differ from $\Cloc(\Gamma)$. A \emph{nonlocal game} is defined by a  probability distribution $\pi \colon S \times T \to [0,1]$ and a predicate $f \colon A \times B \times S \times T \to \{0,1\}$. Alice and Bob receive a question pair $(s,t)\in S\times T$ with probability $\pi(s,t)$.  They know the game parameters $\pi$ and $f$, but they do not know each other's questions, and they cannot communicate after they receive their questions.  
Their answers $(a,b)$ are determined according to some  correlation $P\in \R^\Gamma$,  called their {\em strategy}, on which they may agree before the start of the game, and which can be classical or quantum depending on whether $P$ belongs to $\Cloc(\Gamma)$,  $\Cq(\Gamma)$, or $\Cqc(\Gamma)$.  Then their corresponding winning probability is given by
\begin{equation}\label{eqvalue}
\sum_{(s,t)\in S\times T} \pi(s,t)\sum_{(a,b)\in A\times B} P(a,b|s,t) f(a,b,s,t).
\end{equation}
A strategy $P$ is called  {\em perfect} if the above winning probability is equal to one, that is, if for all $(a,b,s,t)\in \Gamma$ we have
$$\big(\pi(s,t)>0 \quad \text{ and } \quad f(a,b,s,t)=0 \big) \quad \Longrightarrow \quad P(a,b|s,t)=0.$$

Computing the maximum winning probability of a nonlocal game is an instance of linear optimization over $\Cloc(\Gamma)$ in the classical setting, and over $\Cq(\Gamma)$ or $\Cqc(\Gamma)$ in the quantum setting. 
Since the inclusion $\Cloc(\Gamma) \subseteq \Cq(\Gamma)$ can be strict, the winning probability can be higher when the parties have access to entanglement. In fact there are nonlocal games that can be won with probability $1$ by using entanglement, but with probability strictly less than $1$ in the classical setting. 

The quantum graph parameters are analogues of the classical parameters defined through the coloring and stability number games as described below. These nonlocal games use the set $[k]$ (whose elements are denoted as $a,b$) and the set $V$ of vertices of $G$ (whose elements are denoted as $i,j$) as question and answer sets.

In the {\em quantum coloring game}, introduced in~\cite{AHKS06,CMNSW07}, we have a graph $G = (V,E)$ and an integer $k$. Here we have question sets $S=T=V$ and answer sets $A=B=[k]$, and the distribution $\pi$ is strictly positive on $V \times V$. The predicate $f$ is such that the players' answers have to be consistent with having a $k$-coloring of $G$; that is, $f(a,b,i,j)=0$ precisely when ($i=j$ and $a\ne b$) or ($\{i,j\}\in E$ and $a=b$). This expresses the fact that if Alice and Bob receive the same vertex they should return the same color and if they receive adjacent vertices they should return distinct colors.
A perfect classical strategy exists if and only if a perfect deterministic strategy exists, and a perfect deterministic strategy corresponds to a $k$-coloring of $G$. Hence the smallest number $k$ of colors for which there exists a perfect classical strategy is equal to the classical chromatic number $\chi(G)$. 
It is therefore natural to define the quantum chromatic number as the smallest $k$ for which there exists a perfect quantum strategy. Since such a strategy is necessarily synchronous we get the following definition.

\begin{definition}\label{defchiq}
The (commuting) quantum chromatic number $\chi_q(G)$ (resp., $\chi_{qc}(G)$) is the smallest integer $k\in\N$ for which there exists a synchronous correlation $P=(P(a,b|i,j))$ in $ C_{q,s}([k]^2\times V^2)$ (resp.,  $C_{qc,s}([k]^2\times V^2)$) such that 
\begin{align*}
P(a,a|i,j)=0 & \quad  \text{for all} \quad a\in [k], \{i,j\}\in E.
\end{align*}
\end{definition}

In the {\em quantum stability number game}, introduced in~\cite{MR16,Rob13}, we again have a graph $G = (V,E)$ and $k \in \N$, but now we use the  question set $[k]\times [k]$  and the answer set $V\times V$. The distribution $\pi$ is again strictly positive on the question set
and now the predicate $f$ of the game is such that the players' answers have to be consistent with having a stable set of size $k$, that is, 
$f(i,j,a,b)=0$ precisely when ($a=b$ and $i\ne j$) or ($a\ne b$ and ($i=j$ or $\{i,j\}\in E$)). This expresses the fact that if Alice and Bob receive the same index $a=b\in [k]$ they should answer with the same vertex $i=j$ of $G$, and if they receive distinct indices $a\ne b$ from $[k]$ they should answer with distinct nonadjacent vertices $i$ and $j$ of $G$. There is a perfect classical strategy precisely when there exists a stable set of size $k$, so that the largest integer $k$ for which there exists a  perfect classical strategy is equal to the stability number $\alpha (G)$. Again, such a strategy is necessarily synchronous, so we get the following definition.

\begin{definition}\label{defaq}
The (commuting) stability number $\alpha_q(G)$ (resp., $\alpha_{qc}(G)$) is the largest  integer $k\in \N$ for which there exists a synchronous correlation $P=(P(i,j|a,b))$ in
$C_{q,s}(V^2\times [k]^2)$ (resp.,  $C_{qc,s}(V^2\times [k]^2)$) such that 
\begin{align*}
P(i,j|a,b)=0 & \quad \text{whenever} \quad  (i=j \text{ or } \{i,j\}\in E) \text{ and } a\ne b\in [k].
\end{align*}
\end{definition}

The classical parameters $\chi(G)$ and $\alpha(G)$ are NP-hard.  The same holds for the quantum coloring number  $\chi_q(G)$~\cite{Ji13}
and also for the quantum stability number $\alpha_q(G)$ in view of the following reduction to coloring shown in~\cite{MR16}:
\begin{equation}\label{eqcolstab}
\chi_q(G)=\min\{k\in \N: \alpha_q(G\Box K_k)=|V|\}.
\end{equation}
Here $G\Box K_k$ is the Cartesian product of the graph $G=(V,E)$ and the complete graph $K_k$. 
By construction we have $\chi_{qc}(G)\le \chi_q(G)\le \chi(G)$ and $\alpha(G)\le \alpha_q(G)\le \alpha_{qc}(G)$.
The separations between $\chi_q(G)$ and $\chi(G)$, and  between  $\alpha_q(G)$ and $\alpha(G)$, can  be exponentially large in the number of vertices; this is the case for the graphs with vertex set $\{\pm 1\}^n$ for $n$ a multiple of $4$, where two vertices are adjacent if they are orthogonal~\cite{AHKS06,MR16,MSS13}. 
While it was recently shown that the sets $C_{q,s}(\Gamma)$ and $C_{qc,s}(\Gamma)$ can be different, it is not known whether there is a separation  between the parameters $\chi_q(G)$ and $\chi_{qc}(G)$, and between $\alpha_q(G)$ and $\alpha_{qc}(G)$.

\bigskip

We now  give an overview of the results of Section~\ref{sec:estquantumchrom} and refer to that section for formal definitions. We first reformulate the quantum graph parameters in terms of $C^*$-algebras, which allows us to use techniques from tracial polynomial optimization to formulate bounds on the quantum graph parameters. We define a hierarchy $\{\gamma_r^\mathrm{col}(G)\}$ of lower bounds on the commuting quantum chromatic number and a hierarchy $\{\gamma_r^\mathrm{stab}(G)\}$ of upper bounds on the commuting quantum stability number. We show the following convergence results for these hierarchies. 

\begin{restatable*}{proposition}{LemConvergenceQuantum}
There is an $r_0 \in \N$ such that $\gamma_r^\mathrm{col}(G) = \chi_{qc}(G)$ and $\gamma_r^\mathrm{stab}(G) = \alpha_{qc}(G)$ for all $r \geq r_0$. Moreover, if $\gamma_r^\mathrm{col}(G)$ admits a flat optimal solution, then $\gamma_r^\mathrm{col}(G) = \chi_q(G)$, and if $\gamma_r^\mathrm{stab}(G)$ admits a flat optimal solution, then $\gamma_r^\mathrm{stab}(G) = \alpha_q(G)$. 
\end{restatable*}

Then we define tracial analogues $\{\xib{stab}{r}(G)\}$ and $\{\xib{col}{r}(G)\}$  of Lasserre type bounds on $\alpha(G)$ and $\chi(G)$ that provide hierarchies of bounds for their quantum analogues. These bounds are more economical than the bounds $\gamma^{\rm col}_r(G)$ and $\gamma^{\rm stab}_r(G)$ (since they use less variables) and also permit to recover some known bounds for the quantum parameters. We show  that $\xib{stab}{*}(G)$, which is the parameter $\xib{stab}{\infty}(G)$ with an additional rank constraint on the matrix variable, coincides with the projective packing number $\alpha_p(G)$ from~\cite{Rob13} and that $\xib{stab}{\infty}(G)$ upper bounds $\alpha_{qc}(G)$.

\begin{restatable*}{proposition}{lemalphap}\label{lemalphap}
We have $\xib{stab}{*}(G) = \alpha_p(G)\ge \alpha_q(G)$ and $\xib{stab}{\infty}(G)\ge \alpha_{qc}(G)$.
\end{restatable*}

Next, we consider the chromatic number. The tracial hierarchy $\{\xib{col}{r}(G)\}$  unifies two known bounds: the projective rank $\xi_f(G)$, a lower bound on the quantum chromatic number from~\cite{MR16}, and the tracial rank $\xi_{tr}(G)$, a lower bound on the commuting quantum chromatic number from~\cite{PSSTW16}. 
In~\cite[Cor. 3.10]{DP16} it is shown that the projective rank and the tracial rank coincide if Connes' embedding conjecture is true. 

\begin{restatable*}{proposition}{lemchif}\label{lemchif}
We have $\xib{col}{*}(G) = \xi_f(G)\le \chi_q(G)$ and $\xib{col}{\infty}(G)=\xi_{tr}(G)\le \chi_{qc}(G)$.
\end{restatable*}

We compare the hierarchies $\xib{col}{r}(G)$ and $\gamma_r^\mathrm{col}(G)$, and the hierarchies $\xib{stab}{r}(G)$ and $\gamma^{\rm stab}_r(G)$.
For the coloring parameters, we show the analogue of  reduction~\eqref{eqcolstab}. 

\begin{restatable*}{proposition}{propXicol}\label{propXicol}
For  $r\in \N\cup\{\infty\}$ we have $\gamma^{\rm col}_r(G)= 
\min\{k: \xib{stab}{r}(G\Box K_k)=|V|\}.$
\end{restatable*}

We show an analogous statement for the stability parameters, when using the homomorphic graph product of $K_k$ with the complement of $G$, denoted here as $K_k\star G$, and the following reduction shown in~\cite{MR16}:
\[
\alpha_q(G)=\max\{k\in\N: \alpha_q(K_k\star G)=k\}.
\]

\begin{restatable*}{proposition}{propXistab}\label{propXistab}
For $r \in \N\cup\{\infty\}$ we have $\gamma^{\rm stab}_r(G)= 
\max\{k: \xib{stab}{r}(K_k\star G)=k\}.$
\end{restatable*}

Finally, we show that the hierarchies $\{\gamma^{\rm col}_r(G)\}$ and $\{\gamma^{\rm stab}_r(G)\}$ refine the hierarchies $\{\xib{col}{r}(G)\}$ and $\{\xib{stab}{r}(G)\}$.

\begin{restatable*}{proposition}{propcolstabcompare}
For $r\in \N\cup\{\infty, *\}$, $\xib{col}{r}(G) \le \gamma^{\rm col}_r(G)$ and $ \xib{stab}{r}(G)\ge \gamma^{\rm stab}_r(G)$.
\end{restatable*}

\subsection{Techniques from noncommutative polynomial optimization}

To derive our bounds we use techniques from tracial polynomial optimization. This  is a noncommutative extension of the widely used moment and sum-of-squares techniques from Lasserre~\cite{Las01} and Parrilo~\cite{Par00} in polynomial optimization, dealing with the problem of minimizing a multivariate polynomial  
over a feasible region defined by polynomial inequalities.
These techniques have been adapted to the noncommutative setting in~\cite{NPA08} and~\cite{DLTW08} for approximating the set $\Cqc(\Gamma)$ of commuting quantum correlations and the winning probability of nonlocal games over $\Cqc(\Gamma)$ (and, more generally, computing Bell inequality violations).
In~\cite{NPA10,NPA12} this approach  has been extended to the general eigenvalue optimization problem, of the form 
\begin{align*}
\mathrm{inf} \big\{ \psi^* f(X_1,\ldots,X_n) \psi : \;& d \in \N,\, \psi \in \C^d \text{ unit vector},\, X_1,\ldots,X_n \in \C^{d \times d},\\
& g(X_1,\ldots,X_n) \succeq 0 \text{ for } g \in \mathcal G\big\}.
\end{align*}
Here, the matrix variables $X_i$ have free dimension $d\in \N$ and $\{f\}\cup  \mathcal G\subseteq \R\langle x_1,\ldots,x_n\rangle$ is a set of symmetric polynomials in noncommutative variables. In tracial optimization, instead of minimizing the smallest eigenvalue of $f(X_1,\ldots,X_n)$, we minimize its normalized trace ${\rm Tr}(f(X_1,\ldots,X_n))/d$ (so that the identity matrix has trace one)~\cite{BK12,BCKP13,BKP16,KP16}. 
The moment approach for these problems relies on minimizing $L(f)$, where $L$ is  a linear functional on the space of noncommutative polynomials satisfying some necessary conditions, and $L(f)$ models $\psi^*f(X_1,\ldots,X_n)\psi$ or ${\rm Tr}(f(X_1,\ldots,X_n))/d$. By  truncating the degrees  one gets hierarchies of lower bounds for the original problem. The asymptotic limit of these bounds involves operators $X_i$ on a Hilbert space (possibly of infinite dimension). In tracial optimization this leads to allowing solutions $X_i$ in a $C^*$-algebra $\mathcal A$ equipped with a tracial state $\tau$, where $\tau(f(X_1,\ldots,X_n))$ is minimized.

An important feature in noncommutative optimization  is the dimension independence: the optimization is over all possible matrix sizes $d \in \mathbb N$. 
In some applications one may want to restrict to optimizing over matrices with restricted size $d$. In~\cite{NMVT15,NFAV15} techniques are developed that allow to incorporate this dimension restriction by suitably selecting the linear functionals $L$ in a specified space; this is used to give bounds on the maximum violation of a Bell inequality 
in a fixed dimension.
A related natural problem is to decide what is the minimum dimension $d$ needed to realize a given algebraically defined object, such as a (commuting) quantum correlation $P$. 
We propose an approach based on tracial optimization: starting from the observation that the trace of the $d\times d$ identity matrix gives its size $d$, we consider the problem of minimizing $L(1)$ where $L$ is a linear functional modeling the non-normalized matrix trace. This approach has been used in several recent works~\cite{TS15,Nie16,GdLL17a} for lower bounding factorization ranks of matrices and tensors. 

\section{A hierarchy for the minimal entanglement dimension} \label{sec: average entanglement dimension}

\subsection{The minimal average entanglement dimension}

We start by showing that it does not matter whether we use the tensor or the commuting model when defining the average entanglement dimension.

\propRelaAvg
\begin{proof}
The easy inequality $A_{qc}(P) \leq A_q(P)$ follows from $E_s^a \otimes F_t^b = (E_s^a \otimes I) (I \otimes F_t^b)$.

For the other inequality we suppose $P = \smash{\sum_{i=1}^I} \lambda_i P_i$ is feasible for $A_{qc}(P)$. This means we have POVMs $\{X_s^a(i)\}_a$ and $\{Y_t^b(i)\}_b$ in $\C^{d_i \times d_i}$ with $[X_s^a(i),Y_t^b(i)] = 0$ and unit vectors $\psi_i\in\C^{d_i}$ such that $P_i(a,b|s,t) = \psi_i^* X_s^a(i) Y_t^b(i) \psi_i$ for all $(a,b,s,t) \in \Gamma$ and $i\in [I]$. We will construct a feasible solution to $A_q(P)$ with value at most $\sum_i\lambda_id_i$. 

Fix some index $i\in [I]$. By Artin-Wedderburn theory applied to $\C\langle \{X^a_s(i)\}_{a,s}\rangle$,
the $*$-algebra generated by the matrices $X^a_s(i)$ for $(a,s)\in A\times S$, there exists a unitary matrix $U_i$ and integers $K_i,m_k,n_k$ such that
\[
U_i \C \langle \{X_s^a(i)\}_{a,s} \rangle U_i^* = \bigoplus_{k=1}^{K_i} (\C^{n_k \times n_k} \otimes I_{m_k}) \quad \text{and} \quad d_i=\sum_{k=1}^{K_i} m_kn_k.
\]
By the commutation relations each matrix $Y^b_t(i)$ commutes with all the matrices in $\C\langle \{X^a_s(i)\}_{a,s}\rangle$, and thus $U_i Y_t^b(i)U_i^*$ lies in the algebra $\bigoplus_k (I_{n_k} \otimes \C^{m_k \times m_k})$. Hence, we may assume
\[
X_s^a(i) = \bigoplus_{k=1}^{K_i} E_s^a(i,k) \otimes I_{m_k},
\quad Y_t^b(i) = \bigoplus_{k=1}^{K_i} I_{n_k} \otimes F_t^b(i,k), \quad 
\psi_i = \bigoplus_{k=1}^{K_i} \psi_{i,k},
\]
with $E^a_s(i,k)\in \C^{n_k\times n_k}$, $F^b_t(i,k)\in\C^{m_k\times m_k}$, and $\psi_{i,k} \in \C^{m_kn_k}$. Then we have
\begin{align*}
P_i(a,b|s,t)=\mathrm{Tr}(X_s^a(i) Y_t^b(i) \psi_i\psi_i^*)
&=\sum_{k=1}^{K_i} \|\psi_{i,k}\|^2\, \underbrace{\mathrm{Tr}\left(E_s^a(i,k) \otimes F_t^b(i,k) \frac{\psi_{i,k} \psi_{i,k}^*}{\|\psi_{i,k}\|^2}\right)}_{Q_{i,k}(a,b|s,t)},
\end{align*}
where $Q_{i,k}\in \Cq(\Gamma)$.
As $\sum_k \|\psi_{i,k}\|^2 = \|\psi_i\|^2 = 1$, we have that $P_i=\sum_k \|\psi_{i,k}\|^2 Q_{i,k}$ is a convex combination of the $Q_{i,k}$'s.

We now show that $Q_{i,k}\in \Cq^{\min\{m_k,n_k\}}(\Gamma)$.
Consider the Schmidt decomposition
$
\psi_{i,k}/\|\psi_{i,k}\| = \sum_{l=1}^{\min\{m_k, n_k\}} \lambda_{i,k,l} \, v_{i,k,l} \otimes w_{i,k,l},
$
where $\lambda_{i,k,l}\ge 0$ and $\{v_{i,k,l}\}_{l=1}^{n_k} \subseteq \C^{n_k}$ and $\{w_{i,k,l}\}_{l=1}^{m_k} \subseteq \C^{m_k}$ are orthonormal bases. Define unitary matrices $V_k\in\C^{n_k\times n_k}$ and $W_k\in\C^{m_k\times m_k}$ such that $V_k v_{i,k,l}$ is the $l$th unit vector in $\R^{n_k}$ and $W_k w_{i,k,l}$ is the $l$th unit vector in $\R^{m_k}$ for $l\le \min\{m_k,n_k\}$. Let $E_s^a(i,k)'$ (resp., $F_t^b(i,k)'$) be the leading principal submatrices of $V_k E_s^a(i,k) V_k^*$ (resp., $W_k F_t^b(i,k) W_k^*$) of size $\min\{m_k, n_k\}$. Moreover, set $\phi_{i,k} = \sum_{l=1}^{\min\{m_k,n_k\}} \lambda_{i,k,l} e_l \otimes e_l$, where $e_l$ is the $l$th unit vector in $\R^{\min\{m_k, n_k\}}$.
Then we have
\begin{align*}
Q_{i,k}(a,b|s,t) 
&=
\mathrm{Tr}\left(E_s^a(i,k) \otimes F_t^b(i,k) \frac{\psi_{i,k} \psi_{i,k}^*}{\|\psi_{i,k}\|^2}\right)\\
& = \sum_{l,l'=1}^{\min\{m_k, n_k\}} \lambda_{i,k,l} \lambda_{i,k,l'} v_{i,k,l}^* E_s^a(i,k) v_{i,k,l'} w_{i,k,l}^* F_t^b(i,k) w_{i,k,l'}\\
& = \sum_{l,l'=1}^{\min\{m_k, n_k\}} \lambda_{i,k,l} \lambda_{i,k,l'} e_l^* E_s^a(i,k)' e_{l'} e_l^* F_t^b(i,k)' e_{l'}\\
&= \mathrm{Tr}((E_s^a(i,k)' \otimes F_t^b(i,k)') \phi_{i,k} \phi_{i,k}^*),
\end{align*}
thus showing $Q_{i,k}\in \Cq^{\min\{m_k,n_k\}}(\Gamma)$. Since $P=\sum_{i,k} \lambda_i \|\psi_{i,k}\|^2 Q_{i,k}$ is a convex decomposition, we obtain
\[
A_q(P)\le \sum_{i,k} \lambda_i \|\psi_{i,k}\|^2 \min\{m_k,n_k\}^2 \leq  \sum_{i,k} \lambda_i \min\{m_k,n_k\}^2 \le \sum_{i,k} \lambda_i m_kn_k= \sum_i \lambda_i d_i. \ \qedhere
\]
\end{proof}

We now show  
the parameter $A_q(\cdot)$ permits to characterize classical correlations.
\propeqone
\begin{proof}
If $P \in \Cloc(\Gamma)$, then $P$ can be written as a convex combination of deterministic correlations (which are contained in $\Cq^1(\Gamma)$), hence $A_q(P) =1$.

On the other hand, if $A_q(P) = 1$, then there exist convex decompositions indexed by $l \in \N$: $P = \sum_{i \in I^l} \lambda_i^l P_i^{l}$ with $\{P_i^l\} \subseteq \Cq(\Gamma)$ and $\lim_{l \to \infty} \sum_{i\in I^l} \lambda_l D_q(P_i^l) = 1$.
Decompose $I^l$ as the disjoint union $I_-^l \cup I_+^l$ so that $D_q(P_i)$ is equal to $1$ for $i \in I_-^l$ and strictly greater than $1$ for $i \in I_+^l$. Let $\varepsilon > 0$. For all $l$ sufficiently large we have
\[
\big(1-\sum_{i \in I_+^l} \lambda_i^l\Big) + 2 \sum_{i \in I_+^l} \lambda_i^l \leq \sum_{i \in I_-^l} \lambda_i^l + \sum_{i \in I_+^l} \lambda_i^l D_q(P_i^l) \leq 1 +\varepsilon,
\]
which shows that $\smash{\sum_{i \in I_+^l}} \lambda_i^l \leq \varepsilon$. This shows that $P$ is the limit of convex combinations of deterministic correlations, which implies that $P \in \Cloc(\Gamma)$.
\end{proof}

\propuniformbound
\begin{proof}
Assume for contradiction  
there exists  an integer $K$ such that $A_q(P) \le K$ for all $P \in \Cq(\Gamma)$; we 
show this results in a uniform upper bound $K'$ on $D_{qc}(P)$, 
which implies $\Cq(\Gamma)=C_{qc}^{K'}(\Gamma)$ is closed. 
For this,  we will first show that $P\in \mathrm{conv}(C_{qc}^K(\Gamma))$.

In a first step
observe that any $P\in \Cq(\Gamma)\setminus \mathrm{conv}(C_{qc}^K(\Gamma))$ can be decomposed as
\begin{equation}\label{eqPRQ}
P=\mu_1 R_1+ (1-\mu_1)Q_1,\end{equation}
where $R_1\in \Cq(\Gamma)$, $Q_1\in \text{conv} (C_{qc}^K(\Gamma))$, and $0< \mu_1\le K/(K+1)$.
Indeed, by assumption and using Proposition~\ref{prop:relativistic average entanglement dimension},  $A_{qc}(P)=A_q(P)\le K$, so $P$ can be written as a convex combination
$P = \sum_{i \in I} \lambda_i P_i$ with $\{P_i\} \subseteq  \Cq(\Gamma)$ and $\sum_{i \in I} \lambda_i D_{qc}(P_i) \leq K$.
As $P\not\in \mathrm{conv}(C_{qc}^K(\Gamma))$, the set $J$ of indices $i\in I$ with $D_{qc}(P_i)\ge K+1$ is non empty. Then   
$(K+1) \sum_{i \in J} \lambda_i \leq \sum_{i \in J} \lambda_i D_{qc}(P_i) \leq K$,
and thus $0<\mu_1:=\sum_{i\in I_+}\lambda_i \leq K/(K+1)$. Hence~\eqref{eqPRQ} holds after setting $R_1=(\sum_{i\in J}\lambda_i P_i)/\mu_1$ and
$Q_1=(\sum_{i\in I\setminus J}\lambda_i P_i)/(1-\mu_1)$.

As $R_1\in C_q(\Gamma)\setminus C_{qc}^K(\Gamma)$, we may repeat  the same argument for $R_1$. By iterating we obtain for each integer $k\in \N$ a decomposition 
\[
P=\mu_1\mu_2\cdots \mu_k R_k+  \underbrace{(1-\mu_1)Q_1+\mu_1(1-\mu_2)Q_2+\ldots + \mu_1\mu_2\cdots \mu_{k-1}(1-\mu_k)Q_k}_{= (1-\mu_1\mu_2\cdots \mu_k)\hat Q_k},
\]
where $R_k\in \Cq(\Gamma)$, $\hat Q_k\in \text{conv}(C_{qc}^K(\Gamma))$ and $\mu_1\mu_2\cdots \mu_k\le (K/(K+1))^k$.
As the entries of $R_k$ lie in $[0,1]$ we can conclude that $\mu_1\mu_2\cdots \mu_k R_k$ tends to 0 as $k\to \infty$.
Hence the sequence $(\hat Q_k)_k$ has a limit $\hat Q$ and $P=\hat Q$ holds. As all $\hat Q_k$ lie in the compact set   $\text{conv}(C_{qc}^K(\Gamma))$, we also have $P\in \text{conv}(C_{qc}^K(\Gamma))$. 

The extreme points of the compact convex set $\mathrm{conv}(\smash{C_{qc}^K(\Gamma)})$ lie in $\smash{C_{qc}^K(\Gamma)}$, so, by the Carath\'eodory theorem,  any $P\in \mathrm{conv}(\smash{C_{qc}^K(\Gamma)})$ is a convex combination of  $c$ elements from $\smash{C_{qc}^K(\Gamma)}$, where $c = |\Gamma|+1-|S||T|$. 
By using a direct sum construction one can obtain  $D_{qc}(P) \leq cK$, which shows $K':=cK$ is a uniform upper bound on $D_{qc}(P)$ for all $P\in C_q(\Gamma)$.
\end{proof}

\subsection{Setup of the hierarchy}

We will now construct a hierarchy of lower bounds on the minimal entanglement dimension, using its formulation via $A_{qc}(P)$. Our approach is based on noncommutative polynomial optimization, thus similar to the approach in~\cite{GdLL17a} for bounding matrix factorization ranks. 

We first need some notation. Set $\bx=\big\{x^a_s: (a,s)\in A\times S\big\}$ and $\by=\big\{y^b_t: (b,t)\in B\times T\big\}$,
and let $\langle \bx,\by,z\rangle_r$ be the set of all words of length at most $r$ in the $n = |S||A|+|T||B| + 1$ symbols $x_s^a$, $y_t^b$, and $z$. Moreover, set $\langle \bx, \by, z \rangle = \langle \bx, \by, z \rangle_\infty$. We equip $\langle \bx, \by, z \rangle_r$ with an involution $w \mapsto w^*$ that reverses the order of the symbols in the words and leaves the symbols $x^a_s,y^b_t,z$ invariant; e.g., $(x_s^az)^* = z x_s^a$. Let $\R\langle \bx, \by, z \rangle_r$ be the vector space of all real linear combinations of the words of length (aka degree) at most $r$. The space $\R\langle \bx, \by, z \rangle = \R\langle \bx, \by, z \rangle_\infty$ is the $*$-algebra with Hermitian generators $\{x_s^a\}$, $\{y_t^b\}$, and $z$, and the elements in this algebra are called  \emph{noncommutative polynomials} in the variables $\{x^a_s\},\{y^b_t\},z$.

The hierarchy is based on the following idea: For any feasible solution to $A_{qc}(P)$, its objective value can be modeled as $L(1)$ for a certain tracial linear form $L$ on the space of noncommutative polynomials (truncated to degree $2r$).

Indeed, assume  $\{(P_i, \lambda_i)_i\}$ is a feasible solution to the program $A_{qc}(P)$ defined in Section~\ref{sec:intro avendi}, where $P_i(a,b|s,t) = \mathrm{Tr}\big(X_s^a(i) Y_t^b(i) \psi_i \psi_i^*\big)$ with $X_s^a(i), Y_t^b(i) \in \C^{d_i \times d_i}$, $\psi_i \in \C^{d_i}$, and $d_i = D_{qc}(P_i)$. For $r\in \N \cup \{\infty\}$, consider the linear functional $L \in \R\langle \bx, \by, z \rangle_{2r}^*$ defined by
\[
L(p) = \smash{\sum_i} \lambda_i \, \mathrm{Re}(\mathrm{Tr}(p(\mathbf X(i), \mathbf Y(i), \psi_i \psi_i^*)))\quad \text{ for } \quad p\in \R\langle \bx,\by,z\rangle_{2r}.
\]
Here, for each index $i$, we set $\mathbf X(i)=(X^a_s(i): (a,s)\in A\times S)$, $\mathbf Y(i)=(Y^b_t(i): (b,t)\in B\times T)$, and replace the variables $x^a_s$, $y^b_t$, $z$ by $X^a_s(i)$, $Y^b_t(i)$, and $\psi_i\psi_i^*$.
Then  $L(1) = \sum_i \lambda_i d_i$. That is, $L(1)$ is the objective value of the feasible solution  $\{(P_i, \lambda_i)_i\}$ to $A_{qc}(P)$. 
We will  
identify several computationally tractable properties that this 
$L$ satisfies. Then the  hierarchy of lower bounds on $A_{qc}(P)$ consists of optimization problems where we minimize $L(1)$ over the set of linear functionals that satisfy these properties. 

 First note that $L$ is \emph{symmetric}, that is, $L(w) = L(w^*)$ for all $w \in \langle \bx, \by, z\rangle_{2r}$, and \emph{tracial}, that is, $L(ww') = L(w'w)$ for all $w,w' \in \langle \bx, \by, z\rangle$ with $\deg(ww')\le 2r$.

For all $p \in \R\langle \bx, \by, z\rangle_{r-1}$ we have 
\[
L(p^*x_s^ap) = \sum_i \lambda_i \, \mathrm{Re}(\mathrm{Tr}(M(i)^* X_s^a(i) M(i)) \geq 0, \text{ where } M(i) = p(\mathbf X(i), \mathbf Y(i), \psi_i \psi_i^*),
\]
as $M(i)^* X_s^a(i) M(i)$ is  positive semidefinite since $X^a_s(i)$ is positive semidefinite. In the same way we have $L(p^*y_t^bp) \geq 0$ and $L(p^*z p) \geq 0$. That is, if we set
\[
\mathcal G = \big\{x_s^a : s \in S, \, a \in A\big\} \cup \big\{y_t^b : t \in T, \, b \in B\big\}\cup\{z\},
\]
then $L$ is nonnegative (denoted as $L \geq 0$) on the \emph{truncated quadratic module}
\begin{equation}\label{eq:quadratic module}
\MM_{2r}(\mathcal G)=\mathrm{cone}\Big\{p^*g p: p\in \R\langle \bx, \by, z\rangle, \ g\in \mathcal G\cup\{1\},\ \deg(p^*g p)\le 2r\Big\}.
\end{equation}
Similarly,  setting 
\[
\mathcal H = \big\{z - z^2\big\} \cup \big\{1 - \sum_{a\in A} x_s^a : s \in S\big\} \cup \big\{1 - \sum_{b\in B} y_t^b : t \in T\big\}\cup \big\{[x_s^a, y_t^b] : (s,t,a,b) \in \Gamma\big\},
\]
we have $L = 0$ on the \emph{truncated ideal}
\begin{equation}
\mathcal I_{2r}(\mathcal H) = \Big\{ph : p\in \R\langle \bx, \by, z\rangle, \ h\in \mathcal H,\ \deg(ph)\le 2r\Big\}.
\end{equation}
Moreover, we have $L(z) = \sum_i\lambda_i \mathrm{Re}(\Tr(\psi_i\psi_i^*))=1$. In addition, for any matrices $U,V \in \C^{d_i \times d_i}$ we have 
\[
\psi_i \psi_i^* U \psi_i \psi_i^* V \psi_i \psi_i^* = \psi_i \psi_i^* V \psi_i \psi_i^* U \psi_i \psi_i^*, 
\] and therefore, in particular,
\[
L(w z u z v z) = L(wz v z u z) \quad \text{for all} \quad u,v,w \in \langle  \bx, \by,z\rangle \quad \text{with} \quad \deg(w z u z v z ) \leq 2r.
\]
That is, we have $L = 0$ on $\mathcal I_{2r}(\mathcal R_r)$, where
\[
\mathcal R_r = \big\{z u z v z - z v z u z : u,v \in u,v \in \langle \bx, \by,z\rangle \text{ with } \deg(z u z v z) \leq 2r\big\}.
\]
We get the idea of adding these last constraints from~\cite{NPA12}, where this is used to study the mutually unbiased bases problem.

We call $\MM(\mathcal G) = \MM_\infty(\mathcal G)$  the quadratic module generated by $\mathcal G$, and we call $\MI(\mathcal H \cup \mathcal R_\infty) = \MI_\infty(\mathcal H \cup \mathcal R_\infty)$ the ideal generated by $\mathcal H\cup \mathcal R_\infty$. 
 
 For $r \in \N \cup \{\infty\}$ we can now define the parameter:
\begin{align*}
\xib{q}{r}(P) = \mathrm{min} \Big\{ L(1) : \; & L \in \R\langle \bx, \by, z \rangle_{2r}^* \text{ tracial and symmetric},\\[-0.15em]
& L(z) = 1, \, L(x_s^a y_t^b z) = P(a,b|s,t) \text{ for all } (a,b,s,t)\in\Gamma,\\[-0.1em]
&L \geq 0 \text{ on } \mathcal M_{2r}(\mathcal G),\, L = 0 \text{ on } \mathcal I_{2r}(\mathcal H \cup \mathcal R_r)\Big\}.
\end{align*}
Additionally, we define $\xib{q}{*}(P)$ by adding the constraint $\rank(M(L)) < \infty$ to $\xib{q}{\infty}(P)$.
By construction this gives a hierarchy of lower bounds for $A_{qc}(P)$:
$$\xib{q}{1}(P)\le \ldots \le \xib{q}{r}(P) \le \xib{q}{\infty}(P)\le \xib{q}{*}(P) \le A_{qc}(P).$$
Note that for order $r=1$ we get the trivial lower bound $\xib{q}{1}(P)=1$.

For each finite $r\in \N$ the parameter $\xib{q}{r}(P)$ can be computed by  semidefinite programming. Indeed, the condition $L\ge 0$ on $\mathcal M_{2r}(\mathcal G)$ means that 
$L(p^*gp)\ge 0$ for all  $g \in \mathcal G \cup \{1\}$ and all  polynomials $p\in \R\langle \bx,\by,z\rangle$ with degree at most ${r- \lceil \deg(g)/2\rceil}$. This
 is equivalent to requiring that the matrices $(L(w^*gw'))$, indexed by all words $w,w'$ with degree at most ${r- \lceil \deg(g)/2\rceil}$, 
  are positive semidefinite. To see this, write $p = \sum_{w} 
p_w w$ and let $\hat p=(p_w)$ denote the vector of coefficients, 
then $L(p^*gp) \geq 0$ is equivalent to $\hat p^\T (L(w^*g w')) \hat p \geq 0$. 
When $g=1$, the matrix $(L(w^*w'))$ is indexed by the words of degree at most $r$, it is called the \emph{moment matrix} of $L$ and denoted  by $M_r(L)$ (or $M(L)$ when $r = \infty$). The entries of the matrices  $(L(w^*g w'))$ are linear combinations of the entries of $M_r(L)$, and the constraint $L=0$ on $\MI_{2r}(\mathcal H \cup \mathcal R_r)$ can be written as a set of linear constraints on the entries of $M_r(L)$. It follows that for finite $r \in \N$, the parameter $\xib{q}{r}(P)$ is indeed computable by a semidefinite program. 

\subsection{Background on positive tracial linear forms}
 
Before we show the convergence results we give some background on positive tracial linear forms, which we use again in Section~\ref{sec:estquantumchrom}. We state these results using the variables $x_1,\ldots,x_n$, where we use the notation $\langle \bx \rangle = \langle x_1,\ldots,x_n\rangle$. The results stated below do not always appear in this way in the sources cited; we follow the presentation of~\cite{GdLL17a}, where full proofs for these results are also provided.

First we need a few more definitions. A polynomial $p \in \R\ncx$ is called symmetric if $p^*=p$, and we denote the set of symmetric polynomials by $\mathrm{Sym}\, \R\langle \bx \rangle$. Given $\mathcal G \subseteq \mathrm{Sym} \, \R\langle \bx \rangle$ and $\mathcal H \subseteq \R\ncx$, the set $\mathcal M(\mathcal G) + \mathcal I(\mathcal H)$ is called {\em Archimedean} if it contains the polynomial $R-\sum_{i=1}^n x_i^2$ for some $R > 0$. We will use the concept of a $C^*$-algebra, which for our purposes can be defined as a norm closed $*$-subalgebra of the space $\MB(H)$ of bounded operators on a complex Hilbert space $H$. We say that $\MA$ is \emph{unital} if it contains the identity operator (denoted $1$). An element $a \in \MA$ is called \emph{positive} if $a = b^*b$ for some $b \in \MA$. A linear form $\tau$ on a unital $C^*$-algebra $\MA$ is said to be a \emph{state} if $\tau(1)=1$ and $\tau$ is positive; that is, $\tau(a) \geq 0$ for all positive elements $a \in \MA$. We say that a state $\tau$ is tracial if $\tau(ab)=\tau(ba)$ for all $a,b\in\MA$. See, for example,~\cite{Blackadar06} for more information on $C^*$-algebras. 

The first result relates positive tracial linear forms to $C^*$-algebras; see~\cite{NPA12} for the noncommutative (eigenvalue) setting and~\cite{BKP16} for the tracial setting. 
\begin{theorem}
\label{propLinfinitedim}
Let $\mathcal G \subseteq \mathrm{Sym}\,\R\ncx$ and $\mathcal H \subseteq \R\ncx$ and assume that $\MM(\mathcal G)+ \mathcal I(\mathcal H)$ is Archimedean. For a linear form $L\in \R\ncx^*$, the following are equivalent:
\begin{itemize}
\item[(1)] $L$ is symmetric, tracial, nonnegative on $\MM(\mathcal G)$, zero on $\mathcal I(\mathcal H)$, and $L(1) = 1$;
\item[(2)] there is a unital $C^*$-algebra $\mathcal A$ with tracial state $\tau$ and ${\bf X} \in \mathcal A^n$ such that $g(\bX)$ is positive in $\mathcal A$ for all $g \in \mathcal G$, and $h(\bX) = 0$ for all $h \in \mathcal H$, with
\begin{equation}\label{eqLA}
L(p)=\tau(p({\bf X})) \quad \text{for all} \quad p\in \R\ncx.
\end{equation}
\end{itemize}
\end{theorem}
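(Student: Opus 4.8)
The plan is to run a Gelfand--Naimark--Segal (GNS) type construction on the truncated-free-polynomial algebra $\R\ncx$ to go from (1) to (2), and a direct evaluation argument to go from (2) to (1).

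The direction $(2)\Rightarrow(1)$ is the easy one: given the $C^*$-algebra $\MA$, the tracial state $\tau$, and the tuple $\bX$ as in~\eqref{eqLA}, I would simply define $L(p)=\tau(p(\bX))$ and check the five properties one at a time. Symmetry follows because $\tau$ is a $*$-preserving state and $p(\bX)^*=p^*(\bX)$; traciality is inherited directly from $\tau(ab)=\tau(ba)$; nonnegativity on $\MM(\MG)$ holds because each generator $g(\bX)$ is positive in $\MA$ and hence $p(\bX)^*g(\bX)p(\bX)=\bigl(g(\bX)^{1/2}p(\bX)\bigr)^*\bigl(g(\bX)^{1/2}p(\bX)\bigr)$ is positive, so $\tau$ of it is $\ge 0$; vanishing on $\MI(\MH)$ holds because $h(\bX)=0$ for $h\in\MH$ and hence $p(\bX)h(\bX)=0$; and $L(1)=\tau(1)=1$. (The Archimedean hypothesis is not needed here.)

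For the harder direction $(1)\Rightarrow(2)$, I would first use $L$ to build a semi-inner product on $\R\ncx$ by $\langle p,q\rangle:=L(q^*p)$; symmetry and nonnegativity of $L$ on $\MM(\MG)\supseteq\{q^*q\}$ make this a positive semidefinite Hermitian form. Quotient by the null space $N=\{p:L(p^*p)=0\}$ and complete to get a Hilbert space $\Hcal$; the Cauchy--Schwarz inequality together with $L$ vanishing on $\MI(\MH)$ shows $N$ contains the ideal $\MI(\MH)$, which is what makes the relations $h(\bX)=0$ come out. Left multiplication operators $\pi(x_i)\colon p+N\mapsto x_ip+N$ are the candidate generators $X_i:=\pi(x_i)$. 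The \emph{crucial} step — and the main obstacle — is showing these $\pi(x_i)$ are \emph{bounded} operators on $\Hcal$; this is exactly where the Archimedean condition enters. From $R-\sum_i x_i^2\in\MM(\MG)+\MI(\MH)$ one derives, by the standard representation-theoretic argument (multiplying by $p^*(\cdot)p$ and applying $L\ge 0$), that $L(p^*x_i^2p)\le R\,L(p^*p)$, i.e. $\|\pi(x_i)(p+N)\|^2\le R\|p+N\|^2$, so each $\pi(x_i)$ extends to a bounded self-adjoint operator. One must be a little careful here because $x_i^2\le \sum_j x_j^2$ is not literally an element of the quadratic module; the usual fix is to note $R-x_i^2 = (R-\sum_j x_j^2) + \sum_{j\ne i}x_j^2$ and that each $x_j^2$ lies in $\MM(\MG)$, then sandwich by $p^*\,\cdot\,p$.

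Once boundedness is established, let $\MA$ be the $C^*$-algebra generated by $\{1,\pi(x_1),\dots,\pi(x_n)\}$ inside $\MB(\Hcal)$, which is unital by construction. Define $\tau(a):=\langle a\,\xi,\xi\rangle$ where $\xi=1+N$ is the image of the unit; then $\tau(1)=L(1)=1$ and $\tau(p(\bX))=\langle p(\bX)\xi,\xi\rangle = L(p)$ for all $p\in\R\ncx$, which extends by norm-density and continuity to all of $\MA$ and shows $\tau$ is a state (positivity: $\tau(b^*b)=\|b\xi\|^2\ge 0$). Traciality of $\tau$ on the dense $*$-subalgebra of polynomials in the $X_i$ is inherited from traciality of $L$, i.e. $\tau(p(\bX)q(\bX))=L(pq)=L(qp)=\tau(q(\bX)p(\bX))$, and then extends to all of $\MA$ by continuity. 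Finally, for $g\in\MG$, positivity of $g(\bX)$ in $\MA$ follows from $L$ being nonnegative on $\MM(\MG)$ via the functional-calculus/GNS argument (for every state in the GNS representation $\langle g(\bX)p(\bX)\xi,p(\bX)\xi\rangle=L(p^*gp)\ge 0$, and one upgrades this to operator-positivity using that such cyclic vectors are dense, or alternatively one invokes that a bounded self-adjoint operator which is positive in all such representations is positive); and $h(\bX)=0$ for $h\in\MH$ since $h(\bX)\xi = h+N = 0$ and, because $h(\bX)$ commutes appropriately or by a density argument on the cyclic vector, $h(\bX)(p+N)=ph+N=0$ for all $p$, so $h(\bX)=0$ on a dense set hence everywhere. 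This completes (2).

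The one genuinely delicate point worth flagging in the writeup is the passage from ``$\tau(p^*gp)\ge 0$ for all $p$'' to ``$g(\bX)\succeq 0$ in $\MA$'': in the GNS representation the vectors $p(\bX)\xi$ are dense in $\Hcal$, so $\langle g(\bX)v,v\rangle\ge 0$ for all $v$ in a dense set, and since $g(\bX)$ is bounded self-adjoint this gives $g(\bX)\succeq 0$ as an operator, hence positive in the $C^*$-algebra $\MA$. All of this is standard once the boundedness step goes through, so the real work — and the only place the Archimedean hypothesis is indispensable — is the norm estimate on the $\pi(x_i)$.
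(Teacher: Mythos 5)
Your proposal is correct and follows the standard tracial GNS construction, which is exactly the approach the paper cites (to~\cite{NPA12},~\cite{BKP16}, and its companion~\cite{GdLL17a}). One spot to tighten: since $\MI(\MH)$ is the \emph{left} ideal, establishing $h(\bX)=0$ on all of $\Hcal$ (not just on $\xi$) requires showing $hp\in N$ for every $p$, i.e.\ $L(p^*h^*hp)=0$; this does not follow from density alone but from traciality, which lets you cycle to $L\bigl((pp^*h^*)h\bigr)$ and land in $\MI(\MH)$ --- this is the content behind your phrase ``commutes appropriately,'' and also note the typo $ph+N$ where you mean $hp+N$ since $\pi$ is left multiplication. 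The same traciality trick is what turns vanishing on the left ideal into vanishing on the two-sided ideal in the boundedness step, where you conjugate $R-\sum_j x_j^2$ by $p^*(\cdot)p$.
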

The following can be seen as the finite dimensional analogue of the above result. 
The proof of the unconstrained case ($\mathcal G = \mathcal H = \emptyset$) can be found in~\cite{BK12}, and for the constrained case in~\cite{BKP16}. Given a linear form $L \in \R\ncx^*$, recall that the moment matrix $M(L)$ is given by $M(L)_{u,v} = L(u^*v)$ for $u,v \in \ncx$. 
\begin{theorem}
\label{propLfinitedim}
Let $\mathcal G \subseteq \mathrm{Sym}\,\R\ncx$ and $\mathcal H \subseteq \R\ncx$. For $L\in \R\ncx^*$, the following are equivalent:
\begin{enumerate}
\item[(1)] $L$ is a symmetric, tracial, linear form with $L(1) =1$ that is nonnegative on $\MM(\mathcal G)$, zero on $\mathcal I(\mathcal H)$, and has $\mathrm{rank}(M(L)) < \infty$;
\item[(2)] there is a finite dimensional $C^*$-algebra $\mathcal A$ with a tracial state $\tau$ and ${\bf X} \in \mathcal A^n$ satisfying~\eqref{eqLA}, with $g(\bX)$ positive in $\mathcal A$ for all $g \in \mathcal G$ and $h(\bX) = 0$ for all $h \in \mathcal H$;
\item[(3)] $L$ is a convex combination of normalized trace evaluations at tuples ${\bf X}$ 
of Hermitian matrices that satisfy $g(\bX) \succeq 0$ for all $g \in \mathcal G$ and $h(\bX) = 0$ for all $h \in \mathcal H$.
\end{enumerate}
\end{theorem}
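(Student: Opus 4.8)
The plan is to prove the equivalences via the cycle $(1)\Rightarrow(2)\Rightarrow(3)\Rightarrow(1)$. The implication $(3)\Rightarrow(1)$ is the routine direction: a single normalized trace evaluation $p\mapsto \Tr(p(\bX))/d$ at a tuple $\bX$ of Hermitian $d\times d$ matrices with $g(\bX)\succeq 0$ and $h(\bX)=0$ is symmetric, tracial, equals $1$ at $p=1$, vanishes on $\mathcal I(\mathcal H)$, and is nonnegative on $\MM(\mathcal G)$ because $\Tr(M^*g(\bX)M)/d\ge 0$ whenever $g(\bX)\succeq 0$; moreover the assignment $w\mapsto w(\bX)$ factors the moment matrix through the $d^2$-dimensional matrix space, so $\rank M(L)\le d^2$. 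Taking a convex combination preserves symmetry, traciality, the normalization, the membership constraints, and keeps the rank finite (the moment matrix of the combination is supported on the span of the individual ranges). Hence $(1)$ holds.

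For $(2)\Rightarrow(3)$ one invokes Artin--Wedderburn: a finite-dimensional $C^*$-algebra $\MA$ is $*$-isomorphic to $\bigoplus_k \C^{n_k\times n_k}$, and every tracial state on it has the form $\tau=\sum_k\mu_k\Tr_{n_k}(\cdot)/n_k$ with $\mu_k\ge 0$ and $\sum_k\mu_k=1$. Writing $\bX=(\bX^{(k)})_k$ blockwise, positivity of $g(\bX)$ in $\MA$ means $g(\bX^{(k)})\succeq 0$ for every $k$, and likewise $h(\bX^{(k)})=0$, while each $X_i^{(k)}$ is Hermitian since $x_i$ is symmetric. Thus $L(p)=\tau(p(\bX))=\sum_k\mu_k\Tr(p(\bX^{(k)}))/n_k$ displays $L$ as the required convex combination of normalized trace evaluations.

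The substance is in $(1)\Rightarrow(2)$, a GNS-type construction. Equip $\R\ncx$ with the sesquilinear form $\langle p,q\rangle:=L(p^*q)$, which is positive semidefinite since $L\ge 0$ on $\MM(\mathcal G)\ni p^*p$. Its radical $N:=\{p: L(p^*p)=0\}$ equals, by Cauchy--Schwarz for $\langle\cdot,\cdot\rangle$, the set $\{p: L(p^*q)=0\text{ for all }q\}$, and using that $L$ is tracial one checks $N$ is a two-sided $*$-ideal of $\R\ncx$ containing $\mathcal I(\mathcal H)$. The quotient $\MA_0:=\R\ncx/N$ then carries a genuine inner product and, crucially, $\dim\MA_0=\rank M(L)<\infty$. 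Left multiplication realizes $\MA_0$ as a finite-dimensional $*$-algebra of operators on the Hilbert space $\MA_0$; being finite-dimensional it is automatically norm-closed, so its complexification $\MA$ is a finite-dimensional $C^*$-algebra. Put $X_i:=x_i+N$ (self-adjoint, as $x_i$ is symmetric) and let $\tau$ be the state induced by $L$ via $\tau(p+N)=L(p)$; it is a \emph{faithful} tracial state by construction, and $h(\bX)=0$ for $h\in\mathcal H$ since $\mathcal H\subseteq N$, while \eqref{eqLA} holds by definition. It remains to see that each $g(\bX)$ is a positive element of $\MA$: it is self-adjoint, and $\tau((p+N)^*g(\bX)(p+N))=L(p^*gp)\ge 0$ for all $p$; in a finite-dimensional $C^*$-algebra with faithful trace a self-adjoint $a$ with $\tau(c^*ac)\ge 0$ for all $c$ is positive, since testing against rank-one $c$ in each matrix block forces $\Tr(a^{(k)}Q)\ge 0$ for all $Q\succeq 0$, i.e.\ $a^{(k)}\succeq 0$. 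This gives $g(\bX)\succeq 0$ in $\MA$ and completes $(2)$.

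The only delicate points, and where I expect the main obstacle, both sit inside $(1)\Rightarrow(2)$: first, verifying that $N$ is a genuine \emph{two-sided} ideal, which is precisely where traciality of $L$ (not merely symmetry) enters, through the trace-compatible Cauchy--Schwarz estimate combined with cyclic reshuffling of words under $L$; and second, upgrading ``$L$ is nonnegative on $\MM(\mathcal G)$'' to ``$g(\bX)$ is a positive \emph{algebra element}'' of $\MA$ rather than something merely $\tau$-nonnegative in a weak sense, which rests on the faithfulness of the induced trace $\tau$ on $\MA$ (automatic here because we quotiented by $N$, but needs to be stated with care). All finiteness assertions---of $\dim\MA_0$, of the number of Artin--Wedderburn blocks, and of the matrix sizes---are immediate consequences of $\rank M(L)<\infty$, so, unlike Theorem~\ref{propLinfinitedim}, no analytic input and no Archimedean hypothesis is required.
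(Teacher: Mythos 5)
Your proof is correct and follows the same standard route that the paper delegates to its references (\cite{BK12,BKP16,GdLL17a}) for this theorem: the GNS construction applied to the tracial linear form $L$ produces the finite-dimensional $*$-algebra $\R\ncx/N$ with $\dim = \rank M(L)$, Artin--Wedderburn decomposes it into matrix blocks with a faithful positive-weight trace, and the remaining implications are routine verifications. One small imprecision worth flagging, in $(3)\Rightarrow(1)$ and in the decomposition step of $(2)\Rightarrow(3)$: for complex Hermitian $\bX$ the map $p\mapsto \Tr(p(\bX))/d$ is not in general real-valued (hence not symmetric) on all of $\R\ncx$, so ``normalized trace evaluation'' must be read as $p\mapsto \operatorname{Re}\Tr(p(\bX))/d$ (exactly as the paper does when constructing its hierarchy), and correspondingly $\tau$ should be split block-wise into real-part trace evaluations --- the imaginary parts cancel in the sum since $L$ is symmetric, and none of your subsequent steps are affected.
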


A truncated 
linear functional $L \in \R\ncx_{2r}$ is called \emph{$\delta$-flat} if the principal submatrix $M_{r-\delta}(L)$ of $M_r(L)$ indexed by monomials up to degree $r-\delta$ has the same rank as $M_r(L)$; $L$ is \emph{flat} if it is $\delta$-flat for some $\delta\geq 1$. The following result claims that any {\em flat} linear functional on a truncated polynomial space  can be extended to a linear functional $L$ on the full algebra of polynomials. It is due to Curto and Fialkow~\cite{CF96} in the commutative case and extensions to the noncommutative case can be found in~\cite{NPA10} (for eigenvalue optimization) and~\cite{BK12,KP16} (for trace optimization). 
\begin{theorem}\label{propextension}
Let $1 \leq \delta \leq t < \infty$, $\mathcal G \subseteq \mathrm{Sym}\,\R\ncx_{2r}$, and $\mathcal H \subseteq \R\ncx_{2r}$. If $L\in \R\ncx_{2r}^*$ is symmetric, tracial, $\delta$-flat, nonnegative on $\MM_{2r}(\mathcal G)$, and zero on $\mathcal I_{2r}(\mathcal H)$, then $L$ extends to a symmetric, tracial, linear form on $\R\ncx$ that is nonnegative on $\mathcal M(\mathcal G)$, zero on $\mathcal I(\mathcal H)$, and whose moment matrix has finite rank.
\end{theorem}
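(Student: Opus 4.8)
The plan is to prove this as the tracial noncommutative analogue of the Curto--Fialkow flat-extension theorem~\cite{CF96}, via a GNS-type construction on a finite-dimensional space. First I would form the quotient $\mathcal E := \R\ncx_r/\ker M_r(L)$, where $\ker M_r(L) = \{p\in\R\ncx_r : L(p^*q)=0 \text{ for all } q\in\R\ncx_r\}$, and endow it with $\langle [p],[q]\rangle := L(p^*q)$; symmetry of $L$ makes this a well-defined symmetric real bilinear form, and passing to the quotient makes it positive definite, so $\mathcal E$ is a real Hilbert space of dimension $\rank M_r(L)=:d<\infty$. The single place where flatness enters is a ``rank-propagation'' fact that I would record at the outset: since $\rank M_{r-\delta}(L)=\rank M_r(L)$ and truncated moment matrices have non-decreasing rank in the truncation order, $\rank M_k(L)=d$ for all $r-\delta\le k\le r$; hence the natural map $\R\ncx_k\to\mathcal E$ is onto and $\ker M_k(L)=\ker M_r(L)\cap\R\ncx_k$ for each such $k$. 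In particular every class in $\mathcal E$ has a representative of degree at most $r-\delta$, hence at most $r-1$.

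Second, I would build ``left multiplication'' operators $X_1,\dots,X_n$ on $\mathcal E$ by $X_i[p]:=[x_ip]$, choosing for $[p]$ a representative $p$ of degree $\le r-1$. The crux is that $X_i$ is well defined: if $p,p'$ are two such representatives of the same class then $p-p'\in\ker M_r(L)\cap\R\ncx_{r-1}$, and one checks $x_i(p-p')\in\ker M_r(L)$ by verifying $L(w^*x_i(p-p'))=0$ for all words $w$ of degree $\le r$, directly when $\deg w\le r-1$ and, when $\deg w=r$, after first replacing $w$ by a degree-$(\le r-1)$ representative using rank-propagation. Each $X_i$ is self-adjoint since $\langle X_i[p],[q]\rangle=L((x_ip)^*q)=L(p^*x_iq)=\langle[p],X_i[q]\rangle$ on the spanning set of degree-$(\le r-1)$ classes. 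Writing $\bX=(X_1,\dots,X_n)$, I define the candidate extension by $\widehat L(p):=\langle p(\bX)[1],[1]\rangle$, where $[1]\in\mathcal E$ is the class of the constant polynomial. Since $w(\bX)[1]=[w]$ for every word $w$ of degree $\le r$ (apply the $X_i$'s one letter at a time), the moment matrix of $\widehat L$ has rank $\dim\mathcal E=d<\infty$, giving the finite-rank conclusion for free.

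Third comes the verification of the remaining properties. (i) $\widehat L$ restricts to $L$ on $\R\ncx_{2r}$: writing a word $w$ of degree $\le 2r$ as $w=uv$ with $\deg u,\deg v\le r$ gives $\widehat L(w)=\langle v(\bX)[1],u^*(\bX)[1]\rangle=\langle[v],[u^*]\rangle=L(v^*u^*)=L(w^*)=L(w)$, and one extends by linearity. (ii) Symmetry of $\widehat L$ is immediate because the inner product is real and symmetric. (iii) Traciality: $\widehat L(uv)=\widehat L(vu)$ holds whenever $\deg(uv)\le 2r$ by traciality of $L$, and I would promote this to arbitrary words using linearity together with the surjectivity $\R\ncx_{r-1}\to\mathcal E$, since $\widehat L(uv)=\langle v(\bX)[1],u^*(\bX)[1]\rangle$ depends on $u,v$ only through the classes $v(\bX)[1],u^*(\bX)[1]\in\mathcal E$, each a combination of low-degree classes. (iv) Nonnegativity on $\mathcal M(\mathcal G)$: for symmetric $g\in\mathcal G$ one has $\widehat L(p^*gp)=\langle g(\bX)\xi_p,\xi_p\rangle$ with $\xi_p:=p(\bX)[1]$ and $g(\bX)=g(\bX)^*$, so it suffices to prove $g(\bX)\succeq 0$ on $\mathcal E$; this follows because $\langle g(\bX)[q],[q]\rangle=L(q^*gq)\ge 0$ for $\deg q\le r-\lceil\deg g/2\rceil$, and, provided $\delta$ is at least $\lceil\deg g/2\rceil$, such classes $[q]$ already span $\mathcal E$ by rank-propagation. (v) Vanishing on $\mathcal I(\mathcal H)$: symmetrically $h(\bX)=0$ on $\mathcal E$, since $\langle h(\bX)[q],[p]\rangle=L(p^*hq)=0$ (using that $L$ is tracial and vanishes on $\mathcal I_{2r}(\mathcal H)$) for all $[p],[q]$ in a spanning set, whence $\widehat L(ph)=0$ for all $p$.

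The main obstacle is the well-definedness of the shift operators $X_i$ in step two: this is the one point where the flatness hypothesis does genuine work, and the rank-propagation lemma it powers is what makes every later step go through. The secondary difficulty is the degree bookkeeping in (iii)--(v), namely ensuring the flatness order $\delta$ dominates half the degrees of the polynomials in $\mathcal G\cup\mathcal H$, so that the low-degree classes carrying the known (in)equalities span $\mathcal E$; this is automatic in the intended applications, where the relevant generators have degree at most $2$ and a single flat step suffices. The whole construction is the tracial version of Curto--Fialkow's flat extension, with fully detailed proofs in~\cite{BK12,KP16} (trace setting) and~\cite{NPA10} (eigenvalue setting); our statement follows by running that argument while carrying the constraint sets $\mathcal G$ and $\mathcal H$ along.
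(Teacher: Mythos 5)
The paper does not supply a proof of this theorem: it is stated as background and delegated to~\cite{CF96,NPA10,BK12,KP16}, with the remark that full proofs following the stated formulation appear in~\cite{GdLL17a}. The hint available in this paper (in the proof of the subsequent flat-convergence proposition) shows that the intended argument passes through the vector-space decomposition $\R\ncx = \mathrm{span}(W)\oplus\mathcal I(N_r(L))$, where $\mathcal I(N_r(L))$ is the \emph{two-sided} ideal generated by the flat kernel $N_r(L)$. Your GNS construction of $\mathcal E=\R\ncx_r/\ker M_r(L)$ with left-shift operators is the same idea in a different dress, and your steps (i), (ii), (iv), (v) and the rank-propagation lemma are essentially correct.

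However, step (iii), the traciality of $\widehat L$, has a genuine gap. You argue that $\widehat L(uv)=\langle v(\bX)[1],u^*(\bX)[1]\rangle$ depends only on the classes $v(\bX)[1]$ and $u^*(\bX)[1]$ and that surjectivity of $\R\ncx_{r-1}\to\mathcal E$ lets you reduce to the low-degree case. But $\widehat L(vu)=\langle u(\bX)[1],v^*(\bX)[1]\rangle$ depends on $u$ through $u(\bX)[1]$, not through $u^*(\bX)[1]$, and nothing in your construction guarantees that the two ``quotient maps'' $u\mapsto u(\bX)[1]$ and $u\mapsto u^*(\bX)[1]$ have the same kernel. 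Concretely, to run the reduction you need the implication ``$u(\bX)[1]=[\tilde u]$ with $\deg\tilde u\le r-1$ $\Rightarrow$ $u^*(\bX)[1]=[\tilde u^*]$''; attempting to prove this directly gives $\|s^*(\bX)[1]\|^2=\widehat L(ss^*)$ and $\|s(\bX)[1]\|^2=\widehat L(s^*s)$, so the implication is \emph{equivalent} to the traciality you are trying to establish, and the argument is circular. This is the one place where the noncommutative/tracial case is genuinely harder than Curto--Fialkow.

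The standard fix is to also build right-multiplication operators $R_i[\tilde p]:=[\tilde p x_i]$, verify they are well defined (same flatness argument as for $X_i$), self-adjoint (this uses traciality of $L$: $L(x_j\tilde p^*\tilde q)=L(\tilde p^*\tilde qx_j)$), and \emph{commute with the $X_i$} (another degree-bookkeeping argument using both flatness and traciality of $L$ on $\R\ncx_{2r}$). Then $u(\bX)[1]=R_u[1]$ for every word $u$, and traciality drops out: $\widehat L(uv)=\langle R_v[1],R_{u^*}[1]\rangle=\langle R_uR_v[1],[1]\rangle=\langle R_{vu}[1],[1]\rangle=\widehat L(vu)$. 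Equivalently, in the language the paper actually uses later, this is the assertion that $N_r(L)$ generates a \emph{two-sided} ideal contained in the annihilator of $\widehat L$, rather than only the left ideal that your left-shift construction directly gives you. Without this extra piece, the proof is incomplete. The degree condition $2\delta\ge\deg g$ for $g\in\mathcal G\cup\mathcal H$ that you flag in (iv)--(v) is a genuine (if implicit) hypothesis and you are right to note it, but the traciality gap is the substantive issue.
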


The following technical lemma, based on the Banach-Alaoglu theorem, is a well-known tool to show asymptotic convergence results in polynomial optimization.

\begin{lemma}\label{lemma:upperboundLw_new}
Let $\mathcal G \subseteq \mathrm{Sym}\, \R\ncx$, $\mathcal H \subseteq \R\ncx$, 
and assume that for some $d \in \N$ and $R>0$ we have $R-(x_1^2 + \cdots + x_n^2) \in \MM_{2d}(\mathcal G)+\mathcal I_{2d}(\mathcal H)$. 
 For $r\in \N$ assume $L_r \in \smash{\R\langle \bx \rangle_{2r}^*}$ is tracial, nonnegative on $\mathcal M_{2r}(\mathcal G)$ and zero on $\MI_{2r}(\mathcal H)$. 
Then $\smash{|L_r(w)|\le R^{|w|/2} L_r(1)}$ for all $w\in \ncx_{2r-2d+2}$. In addition, if $\sup_r \, L_r(1) < \infty$, then  $\smash{\{L_r\}}_r$ has a pointwise converging subsequence in $\smash{\R\langle \bx \rangle^*}$.
\end{lemma}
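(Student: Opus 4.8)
The plan is to prove the two assertions of Lemma~\ref{lemma:upperboundLw_new} in sequence: first the uniform bound $|L_r(w)| \le R^{|w|/2} L_r(1)$ for words of bounded degree, and then the existence of a pointwise convergent subsequence under the additional hypothesis $\sup_r L_r(1) < \infty$. For the first part, the key observation is that nonnegativity of $L_r$ on the truncated quadratic module $\MM_{2r}(\mathcal G)$ together with vanishing on $\MI_{2r}(\mathcal H)$ forces $L_r(p^* p) \ge 0$ and $L_r(p^*(R - \sum_i x_i^2) p) \ge 0$ for every polynomial $p$ of small enough degree, because $R - \sum_i x_i^2 \in \MM_{2d}(\mathcal G) + \MI_{2d}(\mathcal H)$. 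First I would record that the bilinear form $(p,q) \mapsto L_r(p^* q)$ on $\R\ncx_{r}$ is positive semidefinite, so Cauchy–Schwarz applies: $|L_r(p^*q)|^2 \le L_r(p^*p)\, L_r(q^*q)$. Next I would exploit the ``localizing'' inequality $L_r(p^* x_i^2 p) \le R\, L_r(p^* p)$ (summing over $i$ and using $L_r \ge 0$ on the module) to show, by induction on the word length $k = |w|$, that $L_r(w^* w) \le R^k L_r(1)$ whenever $2k \le 2r - 2d + 2$ — the degree budget $2d$ is consumed once by writing $R - \sum x_i^2$ in the module, and each induction step peels off one variable from each side. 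Then combining this with Cauchy–Schwarz (taking $q = w$, $p = 1$) yields $|L_r(w)| \le L_r(w^*w)^{1/2} L_r(1)^{1/2} \le R^{|w|/2} L_r(1)$ for all $w \in \ncx_{2r-2d+2}$, which is the claimed bound.

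For the second part, I would pass from the pointwise bounds to compactness via a diagonal / Banach–Alaoglu argument. Assuming $M := \sup_r L_r(1) < \infty$, the first part gives $|L_r(w)| \le R^{|w|/2} M$ for every fixed word $w$ and every $r$ large enough that $|w| \le 2r - 2d + 2$. Fix an enumeration $w_1, w_2, \ldots$ of all words in $\ncx$; for each $j$ the scalar sequence $\{L_r(w_j)\}_r$ is eventually bounded by the constant $R^{|w_j|/2} M$, hence has a convergent subsequence. A standard diagonalization produces a single subsequence $\{L_{r_k}\}_k$ along which $L_{r_k}(w_j)$ converges for every $j$; extending by linearity, $L(p) := \lim_k L_{r_k}(p)$ is well-defined on all of $\R\ncx$ and $L_{r_k} \to L$ pointwise. (Equivalently, one can view $\{L_r\}$ as a net in the product space $\prod_{w} [-R^{|w|/2}M,\, R^{|w|/2}M]$, which is compact by Tychonoff, so a convergent subnet exists; the diagonal argument makes this a subsequence since $\ncx$ is countable.)

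I do not expect a serious obstacle here — this is a standard argument in the moment/sum-of-squares literature — but the one point requiring care is the bookkeeping of degrees: the hypothesis only places $R - \sum_i x_i^2$ in $\MM_{2d}(\mathcal G) + \MI_{2d}(\mathcal H)$, a truncation at degree $2d$, so when one multiplies by $p^* (\cdot) p$ to stay inside $\MM_{2r}(\mathcal G) + \MI_{2r}(\mathcal H)$ one needs $\deg(p) \le r - d$, and iterating the induction $k$ times shortens the admissible word length accordingly, giving exactly the stated range $\ncx_{2r-2d+2}$ rather than $\ncx_{2r}$. Making sure each application of ``$L_r \ge 0$ on the module'' respects this truncation, and that the traciality/ideal-vanishing of $L_r$ is what legitimizes treating $R - \sum_i x_i^2$ as an element that can be inserted, is the only place where one must be attentive.
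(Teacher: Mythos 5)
The paper itself does not reproduce the proof of this lemma (it defers to~\cite{GdLL17a}), so I evaluate your argument on its own merits. The overall strategy — Cauchy--Schwarz for the positive-semidefinite form $(p,q)\mapsto L_r(p^*q)$, the localizing inequality from $L_r\big(p^*(R-\sum_i x_i^2)p\big)\ge 0$, induction on word length to get $L_r(w^*w)\le R^{|w|}L_r(1)$, and a diagonal/Tychonoff extraction for the subsequence — is the right one, and your second part (compactness) is fine as written.

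There is, however, a genuine gap in the degree bookkeeping of the first part. Your induction correctly establishes $L_r(w^*w)\le R^{|w|}L_r(1)$ for $|w|\le r-d+1$ (equivalently $2|w|\le 2r-2d+2$), since each use of the localizing inequality requires $\deg(w')\le r-d$. But you then apply Cauchy--Schwarz with $p=1$, $q=w$ and assert $|L_r(w)|\le L_r(w^*w)^{1/2}L_r(1)^{1/2}\le R^{|w|/2}L_r(1)$ \emph{for all $w\in\ncx_{2r-2d+2}$}. For such $w$ the monomial $w^*w$ can have degree up to $4r-4d+4>2r$ once $r>2d-2$, so $L_r(w^*w)$ is not even defined, and in any case your inductive bound on $L_r(w^*w)$ only covers $|w|\le r-d+1$, roughly half the claimed range. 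The repair is to split the word $w$ into two (nearly) equal halves $w=uv$ with $|u|,|v|\le \lceil |w|/2\rceil\le r-d+1$, use traciality to reduce to $L_r(uv)$, and apply Cauchy--Schwarz as $|L_r(uv)|\le L_r(uu^*)^{1/2}L_r(v^*v)^{1/2}$; then bound each factor by $R^{|u|}L_r(1)$ resp.\ $R^{|v|}L_r(1)$ via your induction. This yields $|L_r(w)|\le R^{|w|/2}L_r(1)$ on the full range $w\in\ncx_{2r-2d+2}$, as stated. (Also make explicit, as you gesture at, that $L_r$ vanishing on $p^*hp$ for $h\in\MI_{2d}(\mathcal H)$ uses traciality to convert the two-sided conjugation into a one-sided multiple of an element of $\mathcal H$.) The compactness argument is unaffected, since for each fixed $w$ one only needs the bound for $r$ sufficiently large.
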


\subsection{Convergence results}

We first show equality $\xib{q}{*}(P) = A_{qc}(P)$, and then we consider convergence properties of the bounds $\xib{q}{r}(P)$ to the parameters  $\xib{q}{\infty}(P)$ and  $\xib{q}{*}(P)$.

\propstarisaq
\begin{proof}
We already know that $\xib{q}{*}(P) \leq A_{qc}(P)$. To show $\xib{q}{*}(P) \geq A_{qc}(P)$ we  let $L$ be feasible for $\xib{q}{*}(P)$, so that $L \geq 0 $ on $\mathcal M(\mathcal G)$ and $L = 0$ on $\mathcal I(\mathcal H \cup \mathcal R_\infty)$. By Theorem~\ref{propLfinitedim},
there exist finitely many scalars $\lambda_i \geq 0$, Hermitian matrix tuples $\mathbf{X}(i) = (X^a_s(i))_{a,s}$ and  $\mathbf{Y}(i)= (Y^b_t(i))_{b,t}$, and Hermitian matrices $Z_i$, so that $g(\mathbf{X}(i), \mathbf{Y}(i), Z_i) \succeq 0$ for all $g \in \mathcal G$, $h(\mathbf{X}(i), \mathbf{Y}(i), Z_i) = 0$ for all $h \in \mathcal H \cup \mathcal R_\infty$, and
\begin{equation}\label{eq:LWI}
L(p) = \smash{\sum_i} \lambda_i\, \mathrm{Tr}(p(\mathbf{X}(i), \mathbf{Y}(i), Z_i)) \quad \text{for all} \quad p \in \R\langle \bx, \by,  z\rangle.
\end{equation}
By Artin--Wedderburn theory we know that for each $i$ there is a unitary matrix $U_{i}$ such that $U_{i}\C\langle \mathbf X(i), \mathbf  Y(i), Z_i\rangle U^*_{i} = \bigoplus_k \C^{d_k \times d_k} \otimes I_{m_k}$. 
Hence, after applying this further block diagonalization we may assume that in the decomposition (\ref{eq:LWI}), for each $i$,
  $\C\langle \mathbf X(i), \mathbf  Y(i), Z_i\rangle$ is a full matrix algebra $\C^{d_{ i }\times d_{ i}}$.

Since $h(\mathbf{E}(i), \mathbf{F}(i), Z_i) = 0$ for all $h \in R_\infty \cup \{z-z^2\}$, the commutator $\big[Z_i u Z_i, Z_i v Z_i\big]$ vanishes for all $u, v \in \langle \mathbf{E}(i), \mathbf{F}(i), Z_i\rangle$ and hence for all $u,v \in \C\langle \mathbf{E}(i), \mathbf{F}(i), Z_i\rangle$. This means that 
$
[Z_i T_1 Z_i, Z_i T_2 Z_i] = 0$ for all $T_1,T_2 \in \C^{d_i \times d_i}.
$
Since $Z_i$ is a projector, there exists a unitary matrix $U_i$ such that 
$
U_iZ_iU_i^* = \mathrm{Diag}(1,\ldots,1,0,\ldots,0).
$
The above then implies that for all $T_1$ and $T_2$, the leading principal submatrices of size $\mathrm{rank}(Z_i)$ of $U_iT_1U_i^*$ and $U_iT_2U_i^*$ commute. This implies $\mathrm{rank}(Z_{i}) \leq 1$ and thus $\Tr(Z_i) \in \{0,1\}$. Let $I$ be the set of indices with $\mathrm{Tr}(Z_i) = 1$. Then $\sum_{i \in I} \lambda_i = \sum_i \lambda_i \, \mathrm{Tr}(Z_i) = L(z) = 1$.

For each $i\in I$ define $P_i=\smash{(\mathrm{Tr}(E_s^a(i) F_t^b(i) Z_i))}$, which is a quantum correlation in  $C_{qc}^{d_i}(\Gamma)$ because $\Tr(Z_i)=1$,  $\sum_a X^a_s(i)=\sum_b Y^b_t(i)=I$, and $[X^a_s(i),Y^b_t(i)]=0$ by the ideal conditions. 
We have $P=\sum_{i \in I} \lambda_i P_i$, so that $(P_i,\lambda_i)_{i\in I}$ forms a feasible solution to $A_{qc}(P)$ with objective value 
$
\sum_{i\in I} \lambda_iD_{qc}(P_i)\le  \sum_{i \in I} \lambda_i d_i \le \sum_{i} \lambda_i d_i = L(1).
$
\end{proof}

The problem $\xib{q}{r}(P)$ differs in two ways from a standard tracial optimization problem. It does not have the normalization  
$L(1) = 1$ (and instead minimizes $L(1)$), and it has ideal constraints $L = 0$ on $\mathcal I_{2r}(\mathcal R_r)$ where $\mathcal R_r$ depends on $r$. We show asymptotic convergence still holds.

\propconvergetinfty
\begin{proof}
First observe that $1-z^2$, $1-(x_s^a)^2$, $1-(y_t^b)^2 \in \MM_4(\mathcal G \cup \mathcal H_0)$, where $\mathcal H_0$ contains the symmetric polynomials in $\mathcal H$; i.e., omitting the commutators $[x^a_s, y^b_t]$.
Indeed, we have $1-z^2= (1-z)^2+2(z-z^2)$ and $1-(x_s^a)^2= (1-x_s^a)^2+2(1-x^a_s)x^a_s(1-x^a_s)+ 2x^a_s\big( \big(1-\sum_{a'} x^{a'}_s\big)+ \sum_{a'\ne a}x^{a'}_s\big)x^a_s$, and the same for $y_t^b$. Hence $R-z^2-\sum_{a,s}(x^a_s)^2-\sum_{b,t}(y^b_t)^2\in \mathcal M_4(\mathcal G\cup\mathcal H_0)$ for some $R>0$.
Fix $\epsilon>0$ and for each $r\in \N$ let $L_r$ be feasible for $\xib{q}{r}(P)$ with value $L_r(1)\le \xib{q}{r}(P)+\epsilon$.
As $L_r$ is tracial and zero on $\mathcal I_{2r}(\mathcal H_0)$, it follows (using the identity $p^* gp = pp^*g + [p^*g,p]$) that $L=0$ on $\mathcal M_{2r}(\mathcal H_0)$. Hence,  $L_r\ge 0$ on $\mathcal M_{2r}(\mathcal G\cup \mathcal H_0)$.
Since $\sup_rL_r(1)\le A_q(P)+\epsilon$, we can apply Lemma~\ref{lemma:upperboundLw_new} and conclude that $\{L_r\}_r$ has a converging subsequence; denote its  limit by $L_\epsilon\in\R\ncx^*$.
One can verify that $L_\epsilon$ is feasible for $\xib{q}{\infty}(P)$, and $\xib{q}{\infty}(P)\le L_\epsilon(1)\le \lim_{r\to\infty} \xib{q}{r}(P) +\epsilon\le \xib{q}{\infty}(P)+\epsilon.$
Letting $\epsilon\to 0$ we obtain that $\xib{q}{\infty}(P)=\lim_{r\to\infty}\xib{q}{r}(P)$.
\end{proof}

Next we show that if $\xib{q}{r}(P)$ admits a $\delta$-flat optimal solution with $\delta =\lceil r/3 \rceil+1$, then we have $\xib{q}{r}(P) = \xib{q}{*}(P)$. This result is a variation 
of the flat extension result 
from Theorem~\ref{propextension}, where 
 $\delta$ now depends on the order $r$ because the ideal constraints in $\xib{q}{r}(P)$ depend on $r$.
\propqflat

\begin{proof}
Let $\delta = \lceil r/3 \rceil+1$ and let $L$ be a $\delta$-flat optimal solution to $\xib{q}{r}(P)$. We have to show $\xib{q}{r}(P) \geq \xib{q}{*}(P)$, which we do by constructing a feasible solution to $\xib{q}{*}(P)$ with the same objective value. The main step in the proof of Theorem~\ref{propextension}
consists of extending  the linear form $L$  to a tracial symmetric linear form $\smash{\hat L}$ on $\R\langle \mathbf x, \mathbf y, z \rangle$ that is nonnegative on $\mathcal M_{}(\mathcal G)$, zero on $\mathcal I(\mathcal H)$, and satisfies $\mathrm{rank}(M(\hat L)) < \infty$ 
(see the proof of~\cite[Thm.~2.3]{GdLL17a} for a detailed exposition).
 To do this a subset $W$ of $\langle \mathbf x, \mathbf y, z \rangle_{r-\delta}$ is found such that we have the vector space direct sum
$
\R\langle \mathbf x, \mathbf y, z \rangle = \mathrm{span}(W) \oplus \mathcal I(N_r(L)),
$
where $N_r(L)$ is the vector space
\[
N_r(L) = \big\{ p \in \R\langle \mathbf x, \mathbf y, z \rangle_r : L(qp) = 0 \text{ for all } q \in \R\langle \mathbf x, \mathbf y, z \rangle_r \big\}.
\]
It is moreover shown that $\mathcal I(N_r(L)) \subseteq N(\hat L)$. For $p \in \R\langle \mathbf x, \mathbf y, z \rangle$ we denote by $r_p$ the unique element in $\mathrm{span}(W)$ such that $p - r_p \in \mathcal I(N_r(L))$.
We now show that $\hat L$ is zero on $\mathcal I(\mathcal R_\infty)$.
For this fix $u,v, w \in \R\langle \mathbf x, \mathbf y, z\rangle$. 
Then  we have
\[
\hat L(w(z u z v z - z v z u z)) = \hat L(w z uz v z )  -\hat L(wz v z u z ).
\]
Since $\hat L$ is tracial and $u -r_u, v - r_v, w-r_w \in \mathcal I(N_r(L)) \subseteq N(\hat L)$, we have
\[
\hat L(wz u z vz) = \hat L(r_w z r_u z r_v z) \quad \text{and} \quad \hat L(wz v z uz) = \hat L(r_w z r_v z r_u z).
\]
Since $\mathrm{deg}(r_u z r_v z r_wz) = \mathrm{deg}(r_v z r_u z r_wz) \leq 2r$ we have
\[
\hat L(r_w z r_u z r_v z) = L(r_w z r_u z r_v z) \quad \text{and} \quad \hat L(r_w z r_v z r_u z) = L(r_w z r_v z r_u z).
\]
So $L=0$ on $ \mathcal I_{2r}(\mathcal R_r)$ implies $\hat L=0$ on $ \mathcal I(\mathcal R_\infty)$.

Since $\hat L$ extends $L$ we  have $\hat L(z) = L(z) = 1$ and $\hat L(x_s^ay_t^bz) = L(x_s^ay_t^bz) = P(a,b|s,t)$ for all $a,b,s,t$. So, $\hat L$ is feasible for $\xib{q}{*}(P)$ and has the same objective value $\hat L(1) = L(1)$.
\end{proof}

\section{Bounding quantum graph parameters} 
\label{sec:estquantumchrom}

\subsection{Hierarchies \texorpdfstring{$\gamma_r^\mathrm{col}(G)$ and $\gamma_r^\mathrm{stab}(G)$}{gamma col and gamma stab} based on synchronous correlations}\label{sec:gamma}
In Section~\ref{intro: quantum graph parameters} we introduced quantum chromatic numbers (Definition~\ref{defchiq}) and  quantum stability numbers (Definition~\ref{defaq}) in terms of synchronous  quantum correlations satisfying certain linear constraints. 
We first give (known) reformulations in terms of $C^*$-algebras, and then we reformulate those in terms of tracial optimization, which leads to the hierarchies $\gamma_r^\mathrm{col}(G)$ and $\gamma_r^\mathrm{stab}(G)$.

The following result from~\cite{PSSTW16} allows us to write a synchronous quantum correlation in terms of $C^*$-algebras admitting a tracial state.

\begin{theorem}[\cite{PSSTW16}]\label{theoCqcs}
Let $\Gamma = A^2 \times S^2$ and  $P\in\R^\Gamma$. We have $P \in C_{qc,s}(\Gamma)$ (resp., $P \in C_{q,s}(\Gamma)$) if and only if there exists a unital (resp., finite dimensional) $C^*$-algebra $\MA$ with a faithful tracial state $\tau$ and a set of projectors $\{X_s^a: s \in S, a \in A\} \subseteq \MA$  satisfying $\sum_{a \in A} X_s^a = 1$ for all $s \in S$ and
$P(a,b|s,t) = \tau(X_s^a X_t^b)$ for all $s,t \in S, a,b \in A$.
\end{theorem}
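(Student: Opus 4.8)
The plan is to prove both directions separately, using a GNS-type construction for the forward direction and a direct verification for the backward direction.

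\medskip

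\noindent\textbf{Backward direction ($C^*$-algebra data gives a synchronous correlation).} Suppose we have a unital $C^*$-algebra $\MA$ with faithful tracial state $\tau$ and projectors $\{X_s^a\}$ with $\sum_a X_s^a = 1$ for all $s$; set $P(a,b|s,t)=\tau(X_s^aX_t^b)$. I would first check $P$ is a genuine commuting quantum correlation: apply the GNS construction to $(\MA,\tau)$ to get a Hilbert space $H$ with cyclic unit vector $\psi$ (the image of $1$) and a representation $\pi$, so that $\tau(a)=\langle \psi,\pi(a)\psi\rangle$. Then $\pi(X_s^a)$ are projectors summing to $1$; to fit the bipartite commuting framework one uses the standard trick of letting Alice's operators act by left multiplication and Bob's by right multiplication (i.e.\ realize $\MA$ and its commutant $\MA^{\mathrm{op}}$ on $L^2(\MA,\tau)$), so that $\pi(X_s^a)$ and $\rho(X_t^b)$ commute and $P(a,b|s,t)=\langle\psi, \pi(X_s^a)\rho(X_t^b)\psi\rangle$. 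This shows $P\in C_{qc}(\Gamma)$; synchronicity $P(a,b|s,s)=\tau(X_s^aX_s^b)=\delta_{ab}\tau(X_s^a)$ (for $a\ne b$) follows since distinct projectors in a POVM-that-is-a-PVM are orthogonal: $\sum_aX_s^a=1$ with each $X_s^a$ a projector forces $X_s^aX_s^b=0$ for $a\ne b$. In the finite-dimensional case, $\MA\subseteq\C^{d\times d}$ and the same construction gives a tensor-model realization, so $P\in C_{q,s}(\Gamma)$; here one should invoke Proposition~\ref{propcorrelation} or argue directly that a finite-dimensional tracial $C^*$-algebra embeds into a matrix algebra with trace.

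\medskip

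\noindent\textbf{Forward direction ($C_{qc,s}$ correlation yields $C^*$-algebra data).} Given $P\in C_{qc,s}(\Gamma)$, write $P(a,b|s,t)=\langle\psi,X_s^aY_t^b\psi\rangle$ with $\{X_s^a\}_a,\{Y_t^b\}_b$ POVMs of bounded operators on a Hilbert space $H$, $[X_s^a,Y_t^b]=0$, $\psi$ a unit vector. The key structural fact (this is the heart of the PSSTW argument) is that synchronicity forces the POVM elements to be \emph{projectors} and forces $\psi$ to be a \emph{tracial vector} for the algebra they generate. Concretely: from $P(a,b|s,s)=0$ for $a\ne b$ one derives $\langle\psi,X_s^aY_s^b\psi\rangle=0$, and combined with $\sum_aX_s^a=\sum_bY_s^b=1$ and the Cauchy--Schwarz/positivity manipulations one shows $X_s^a\psi=Y_s^a\psi$ and that $X_s^a$ restricted to the cyclic subspace generated by $\psi$ is a projection. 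Then one checks $\langle\psi, w\psi\rangle$ defines a tracial state on the $C^*$-algebra $\MA$ generated by $\{X_s^a\}$ inside $B(H)$ (tracial because of the $X=Y$-on-$\psi$ relation together with $[X,Y]=0$); quotienting by the kernel of the associated GNS inner product (or passing to the von Neumann algebra generated in the GNS representation) makes the trace faithful, and the images of $X_s^a$ remain projectors summing to $1$. For the finite-dimensional refinement $P\in C_{q,s}(\Gamma)$, one starts from a tensor-model realization and uses the same projector/trace extraction, landing in a finite-dimensional $C^*$-algebra.

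\medskip

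\noindent\textbf{Main obstacle.} The delicate step is the forward direction: extracting from mere synchronicity the conclusions that $X_s^a$ is a projector, that $X_s^a\psi=Y_s^a\psi$, and that the vector state is tracial. These are not formal manipulations — they rely on a careful positivity argument (typically: expand $0\le\|(X_s^a-Y_s^a)\psi\|^2$ or $\sum_a\|(X_s^a-\text{proj})\psi\|^2$, use $\sum_aX_s^a=1$, traciality of $\langle\psi,\cdot\,\psi\rangle$ on the relevant subalgebra, and the commutation $[X,Y]=0$). Since this is exactly Theorem~\ref{theoCqcs} quoted from~\cite{PSSTW16}, the cleanest route in the paper is to cite that reference for this implication and only supply the (easier) backward GNS direction in detail, noting that the finite-dimensional statements follow by tracking dimensions through the constructions (and, for the tensor model, by Proposition~\ref{propcorrelation}).
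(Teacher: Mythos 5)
The paper does not actually prove this theorem: it is quoted from~\cite{PSSTW16}, with only the added remark that faithfulness of $\tau$ can be assumed because it comes out of the GNS construction in that reference's proof. Your sketch correctly reproduces the strategy of the cited proof (GNS with left/right multiplication on $L^2(\mathcal A,\tau)$ for the backward direction, and the Cauchy--Schwarz/positivity extraction of $X_s^a\psi=Y_s^a\psi$, projectivity, and traciality for the forward direction), and it honestly flags the one genuinely delicate step, so it is consistent with the paper's treatment, which is simply to cite the result.
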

Here we add the condition that $\tau$ is faithful, that is, $\tau(X^*X) = 0$ implies $X=0$, since it follows from the GNS construction in the proof of~\cite{PSSTW16}.
 This means that
\[
0 = P(a,b|s,t) = \tau(X_s^a X_t^b) = \tau((X_s^a)^2 (X_t^b)^2) = \tau( (X_s^a X_t^b)^* X_s^a X_t^b)
\]
implies $X_s^a X_t^b = 0$. 
It follows that $\chi_{qc}(G)$ is equal to the smallest $k \in \N$ for which there exists a $C^*$-algebra $\MA$, a tracial state $\tau$ on $\MA$, and a family of projectors $\{X_i^c: i \in V, c \in [k]\}\subseteq \MA$ satisfying 
\begin{equation}\label{eqc1}
\sum_{c \in [k]} X_i^c -1 = 0 \quad \text{for all} \quad i \in V, 
\end{equation}
\vspace{-0.7em}
\begin{equation}\label{eqc2}
X_i^c X_j^{c'} = 0 \quad \text{if} \quad (c \ne c'  \text{ and }i=j ) \quad \text{or} \quad (c=c' \text{ and } \{i,j\}\in E).
\end{equation}
The quantum chromatic number $\chi_q(G)$ is equal to the smallest $k \in \N$ for which there exists a finite dimensional $C^*$-algebra $\MA$ with the above properties. 

Analogously, $\alpha_{qc}(G)$ is equal to the largest $k \in \N$ for which there is a $C^*$-algebra $\MA$, a tracial state $\tau$ on $\MA$, and a set of projectors $\{X_c^i: c \in [k], i \in V\}\subseteq \MA$ satisfying 
\begin{equation}\label{eqa1}
\sum_{i \in V} X_c^i -1 = 0 \quad \text{for all} \quad c \in [k], 
\end{equation}
\vspace{-0.7em}
\begin{equation}\label{eqa2}  
X_c^i X_{c'}^j = 0 \quad \text{if } (i \ne j \text{ and } c=c') \quad \text{or} \quad ((i=j \text{ or } \{i,j\}\in E) \text{ and } c\ne c'),
\end{equation}
and $\alpha_q(G)$ is equal to the largest $k \in \N$ for which $\mathcal A$ can be taken finite dimensional. 

These reformulations of $\chi_q(G), \chi_{qc}(G), \alpha_q(G)$ and $\alpha_{qc}(G)$ also follow from~\cite[Thm.~4.7]{OP16}, where general quantum graph homomorphisms are considered; the formulations of $\chi_q(G)$ and $\chi_{qc}(G)$ are also made explicit in~\cite[Thm.~4.12]{OP16}.

By Artin-Wedderburn theory~\cite{Wed,BEK78}, a finite dimensional $C^*$-algebra is isomorphic to a matrix algebra. So the above reformulations of $\chi_q(G)$ and $\alpha_q(G)$ can be seen as feasibility problems of systems of equations in matrix variables of unspecified (but finite) dimension; such formulations are given in~\cite{CMNSW07,MR16,SV16}. 
Restricting to scalar solutions ($1 \times 1$ matrices) in these feasibility problems recovers the classical graph parameters $\chi(G)$ and $\alpha(G)$.

\bigskip

We now reinterpret the above formulations in terms of tracial optimization. Given a graph $G = (V,E)$, let $i \simeq j$ denote $\{i,j\} \in E$ or $i =j$.  For $k \in \N$, let $\mathcal H_{G,k}^{\rm col}$ and $\mathcal H_{G,k}^{\rm stab}$ denote  the sets of polynomials corresponding to equations~\eqref{eqc1}--\eqref{eqc2} and~\eqref{eqa1}--\eqref{eqa2}: 
\[
\mathcal H^{\rm col}_{G,k} =\big\{1-\sum_{c\in [k]}x^c_i : i\in V\big\} \cup \big\{ x^c_ix^{c'}_j :(c \ne c'  \text{ and }i=j ) \text{ or } (c=c' \text{ and } \{i,j\}\in E) \big\}, 
\]
\vspace{-0.2em}
\[
\mathcal H^{\rm stab}_{G,k} = \big\{ 1-\sum_{i\in V}x^i_c  : c\in [k]\big\} \cup \big\{x^i_cx^j_{c'} : (i \ne j \text{ and } c=c')  \text{ or } (i \simeq j \text{ and } c\ne c')\big\}.
\]
We have $1-(x_i^c)^2 \in \MM_2(\emptyset) + \MI_2(\mathcal H_{G,k}^{\rm col})$, since $1-(x_i^c)^2 = (1-x_i^c)^2 + 2 (x_i^c - (x_i^c)^2)$ and
$
x_i^c-(x_i^c)^2= x^c_i\big(1-\sum_{c'}x^{c'}_i\big)+\sum_{c' : c'\ne c}x^c_ix^{c'}_i \in \mathcal I_2(\mathcal H^{\rm col}_{G,k})$,
and the analogous statements hold for $\mathcal H^\mathrm{stab}_{G,k}$. 
Hence, both $\MM(\emptyset) + \MI(\mathcal H_k^{\rm col})$ and $\MM(\emptyset) + \MI(\mathcal H_k^{\rm stab})$ are Archimedean and we can apply Theorems~\ref{propLinfinitedim} and~\ref{propLfinitedim} to express the quantum graph parameters in terms of positive tracial linear functionals.
Namely, 
\begin{align*}
\chi_{qc}(G) = \min \big\{k \in \N : \; & L \in \R\langle \{x_i^c: i \in V, c \in [k]\} \rangle^* \text{ symmetric, tracial, positive,} \\
&L(1) = 1, \, L=0 \text{ on } \MI(\mathcal H_{G,k}^{\rm col}) \big\},
\end{align*}
and $\chi_q(G)$ is obtained by adding the constraint $\rank(M(L)) < \infty$. Likewise, 
\begin{align*}
\alpha_{qc}(G) = \min \big\{k \in \N : \; & L \in \R\langle \{x_c^i: c \in [k], i \in V\} \rangle^* \text{ symmetric, tracial, positive,} \\
&L(1) = 1, \, L=0 \text{ on } \MI(\mathcal H_{G,k}^{\rm stab}) \big\},
\end{align*}
and $\alpha_q(G)$  is given by this program with the additional constraint $\rank(M(L)) <\infty$. 

Starting from these formulations it is natural to define a hierarchy $\smash\{\gamma_r^\mathrm{col}(G)\}$ of lower bounds on $\chi_{qc}(G)$ and a hierarchy $\smash\{\gamma_r^\mathrm{stab}(G)\}$ of upper bounds on $\alpha_{qc}(G)$,  where the  bounds of order $r\in \N$ are obtained by truncating $L$ to polynomials of degree at most $2r$ and truncating the ideal to degree $2r$. Then, by defining $\gamma^{\rm col}_*(G)$ and $\gamma^{\rm stab}_*(G)$ by adding the constraint $\mathrm{rank}(M(L)) < \infty$ to $\gamma^{\rm col}_\infty(G)$ and $\gamma^{\rm stab}_\infty(G)$, we have
\[
\gamma^{\rm col}_\infty(G)=\chi_{qc}(G), \ \  \gamma^{\rm stab}_\infty(G)=\alpha_{qc}(G), \ \  \gamma^{\rm col}_*(G)=\chi_q(G), \ \ \text{and} \quad \gamma^{\rm stab}_*(G) = \alpha_q(G).
\]

The optimization problems $\gamma_r^{\rm col}(G)$, for $r \in \N$, 
can be computed by semidefinite programming and binary search on $k$, since the positivity condition on $L$ can be expressed by requiring that its truncated moment matrix $M_r(L)=(L(w^*w'))$ (indexed by words with degree at most $r$) is positive semidefinite. 
If there is an optimal solution $(k,L)$ to $\gamma^{\rm col}_r(G)$ with $L$ flat, then, by Theorem~\ref{propextension}, we have equality $\gamma_r^\mathrm{col}(G) = \chi_q(G)$. Since $\smash\{\gamma_r^\mathrm{col}(G)\}_{r\in \N}$ is a monotone nondecreasing sequence of lower bounds on $\chi_q(G)$, there exists an $r_0$ such that for all $r \geq r_0$ we have $\gamma_r^\mathrm{col}(G) = \gamma_{r_0}^\mathrm{col}(G)$, which is equal to $\gamma^{\rm col}_\infty(G) =  \chi_{qc}(G)$ by Lemma~\ref{lemma:upperboundLw_new}. The analogous statements hold for the parameters $\gamma_r^\mathrm{stab}(G)$. 
Hence, we have shown the following result.

\LemConvergenceQuantum

\ignore{
\begin{remark} \tcolblue{
A converging sequence $\{Q_r(\Gamma)\}$ of outer approximations to $C_{qc}(\Gamma)$ was defined in~\cite{NPA08} (see also~\cite{NPA10}). They are based on eigenvalue optimization and use linear functionals  on   polynomials  involving two sets of variables $x_s^a,y_t^b$ for $(a,b,s,t)\in\Gamma$. In~\cite{PSSTW16} the authors revisit these outer approximations of $C_{qc}(\Gamma)$ and use feasibility problems over sets $Q_r(\Gamma)$ to define a converging hierarchy of lower bounds on $\chi_{qc}(G)$. Based on Theorem~\ref{theoCqcs} and the tracial optimization approach used here, one can define a converging sequence $\{Q_{r,s}(\Gamma)\}$ of outer approximations to $C_{qc,s}(\Gamma)$ directly, which use linear functionals  on polynomials involving only one set of variables~$x_s^a$. Indeed, define $\mathcal Q_{r,s}(\Gamma)$ as the set of $P\in \R^{\Gamma}$ for which there exists a symmetric, tracial, positive linear functional $L\in \R\langle \{x_s^a: (a,s)\in A\times S\}\rangle_{2r}^*$ 
such that $L(1)=1$ and $L=0$ on the  ideal generated by the polynomials 
$x_s^a-(x_s^a)^2$ ($(a,s)\in A\times S$) and $1-\sum_{a\in A} x_s^a$ ($s\in S$), truncated at degree $2r$. Then we have 
\[
C_{qc,s}(\Gamma)=\mathcal Q_{\infty,s}(\Gamma)=\bigcap_{r\in \N}\mathcal Q_{r,s}(\Gamma).
\]
The synchronous value of a nonlocal game is defined in~\cite{DP16} as the maximum value of the objective function~\eqref{eqvalue} over the set $C_{qc,s}(\Gamma)$.
By maximizing the objective~\eqref{eqvalue} over the relaxations $\mathcal Q_{r,s}(\Gamma)$ we get a hierarchy of semidefinite programming upper bounds that converges to the synchronous value.
Finally note that one can also view $\gamma_r^\mathrm{col}(G)$ as solving feasibility problems over sets $Q_{r,s}(\Gamma)$.}
\end{remark}
}

\begin{remark} 
A  hierarchy  $\{Q_r(\Gamma)\}$ of outer semidefinite approximations for the set $C_{qc}(\Gamma)$ of commuting quantum correlations was constructed in \cite{PSSTW16}, revisiting the approach in~\cite{NPA08,NPA10}. This hierarchy is converging, that is,
\[
C_{qc}(\Gamma)=\mathcal Q_{\infty}(\Gamma)=\bigcap_{r\in \N}\mathcal Q_{r}(\Gamma).
\]
The approximations  $Q_r(\Gamma)$  are based on the eigenvalue optimization approach, applied to the formulation (\ref{eqPcommute}) of commuting quantum correlations, and thus they use linear functionals  on   polynomials  involving two sets of variables $x_s^a,y_t^b$ for $(a,b,s,t)\in\Gamma$. 
The authors of~\cite{PSSTW16} use these outer approximations of $C_{qc}(\Gamma)$  to define a converging hierarchy of lower bounds on $\chi_{qc}(G)$ in terms of feasibility  problems over the sets $Q_r(\Gamma)$.

 For synchronous correlations we can use the result of  Theorem~\ref{theoCqcs} and the tracial optimization approach used here to  define directly a converging hierarchy  $\{Q_{r,s}(\Gamma)\}$ of outer semidefinite approximations for the set  $C_{qc,s}(\Gamma)$ of synchronous commuting quantum correlations.  These approximations now use linear functionals  on polynomials involving only one set of variables~$x_s^a$. Namely, define $\mathcal Q_{r,s}(\Gamma)$ as the set of $P\in \R^{\Gamma}$ for which there exists a symmetric, tracial, positive linear functional $L\in \R\langle \{x_s^a: (a,s)\in A\times S\}\rangle_{2r}^*$ 
such that $L(1)=1$ and $L=0$ on the  ideal generated by the polynomials 
$x_s^a-(x_s^a)^2$ $((a,s)\in A\times S)$ and $1-\sum_{a\in A} x_s^a$ $(s\in S)$, truncated at degree $2r$. Then we have 
\[
C_{qc,s}(\Gamma)=\mathcal Q_{\infty,s}(\Gamma)=\bigcap_{r\in \N}\mathcal Q_{r,s}(\Gamma).
\]
The synchronous value of a nonlocal game is defined in~\cite{DP16} as the maximum value of the objective function~\eqref{eqvalue} over the set $C_{qc,s}(\Gamma)$.
By maximizing the objective~\eqref{eqvalue} over the relaxations $\mathcal Q_{r,s}(\Gamma)$ we get a hierarchy of semidefinite programming upper bounds that converges to the synchronous value of the game.
Finally note that one can also view the parameters $\gamma_r^\mathrm{col}(G)$ as solving feasibility problems over the sets $Q_{r,s}(\Gamma)$.
\end{remark}


\subsection{Hierarchies \texorpdfstring{$\xi_r^\mathrm{col}(G)$ and $\xi_r^\mathrm{stab}(G)$}{xi col and xi stab} based on Lasserre type bounds} \label{sec:nc analogues of lasserre type bounds}

Here we revisit some known Lasserre type hierarchies for the classical stability number $\alpha(G)$ and  chromatic number $\chi(G)$ and we 
show that their tracial noncommutative analogues can be used to recover known parameters such as the projective packing number $\alpha_p(G)$, the projective rank $\xi_f(G)$, and the tracial rank $\xi_{\rm tr}(G)$. Compared to the hierarchies defined in the previous section, these Lasserre type hierarchies use less variables (they only use  variables indexed by the vertices of the graph $G$), but they also do not converge to the (commuting) quantum chromatic or stability number.

Given a graph $G=(V,E)$, define the set of polynomials 
\[
\mathcal H_G = \big\{ x_i - x_i^2: i \in V\big\} \cup \big\{ x_i x_j: \{i,j\} \in E\big\}
\]
in the variables $\bx=(x_i: i\in V)$ (which are commutative or noncommutative depending on the context).
Note that $1-x_i^2\in \MM_2(\emptyset) + \mathcal I_2(\mathcal H_G)$ for all $i\in V$, so 
$\MM(\emptyset) + \mathcal I(\mathcal H_G)$ is Archimedean.

\subsubsection{Semidefinite programming bounds on the projective packing number}

We first recall the Lasserre hierarchy of bounds for the classical stability number $\alpha(G)$. 
Starting from the formulation of  $\alpha(G)$  via the optimization problem
\[
\alpha(G) = \sup \Big\{ \sum_{i \in V} x_i : x\in \R^n, \ h(x) = 0 \text{ for } h \in \mathcal H_G\Big\},
\]
the $r$-th level of the  Lasserre hierarchy for $\alpha(G)$ (introduced in~\cite{Las01,Lau03}) is defined by 
\[
\las{stab}{r}(G)= \mathrm{sup} \Big\{L\big(\sum_{i \in V} x_i\big) : L\in \R\cx_{2r}^* \text{ positive}, \, L(1)=1,\, L= 0  \text{ on }  \MI_{2r}(\mathcal H_G)\Big\}.
\]
Then $\las{stab}{r+1}(G) \leq \las{stab}{r}(G)$, the first bound is Lov\'asz' theta number: $\las{stab}{1}(G)=\vartheta(G)$, and finite convergence to $\alpha(G)$ is shown in~\cite{Lau03}: $\las{stab}{\alpha(G)}(G) = \alpha(G).$ 

Roberson~\cite{Rob13} introduces  the \emph{projective packing number}
\begin{align}
\alpha_p(G) &= \sup \Big\{ \frac{1}{d}\sum_{i \in V} \rank X_i :  d \in \N,\, {\bf X} \in (\mathcal S^d)^n \text{ projectors}, \ X_i X_j = 0 \text{ for } \{i,j\} \in E \Big\} \notag \\
&= \mathrm{sup}\Big\{\frac{1}{d}\mathrm{Tr}\Big(\sum_{i \in V} X_i \Big) : d\in \N,\, {\bf X} \in (\mathcal S^d)^n, \, h({\bf X}) = 0 \text{ for } h\in \mathcal H_G \Big\} \label{eqap}
\end{align}
as an upper bound for the quantum stability number $\alpha_q(G)$; the inequality  $\alpha_q(G)\le\alpha_p(G)$ also follows from Proposition~\ref{lemalphap} below. In view of~\eqref{eqap}, the parameter $\alpha_p(G)$ can be seen as a  noncommutative analogue of $\alpha(G)$.

For $r \in \N \cup \{\infty\}$ we define the noncommutative  analogue of $\las{stab}{r}(G)$ 
by 
\begin{align*}
\xib{stab}{r}(G) = \mathrm{sup}\Big\{L\Big(\sum_{i \in V} x_i\Big) : \; & L\in \R\langle {\bf x}\rangle_{2r}^* \text{ tracial, symmetric, and positive},
\\[-1em]
& L(1)=1,\, L = 0 \text{ on } \MI_{2r}(\mathcal H_G)  \Big\},
\end{align*}
and $\xib{stab}{*}(G)$ by adding the  constraint $\rank(M(L)) < \infty$ to the definition of  $\xib{stab}{\infty}(G)$. 

In view of Theorems~\ref{propLinfinitedim} and~\ref{propLfinitedim}, both $\xib{stab}{\infty}(G)$ and $\xib{stab}{*}(G)$ can be reformulated in terms of $C^*$-algebras: $\xib{stab}{\infty}(G)$ (resp., $\xib{stab}{*}(G)$) is the largest value of $\tau(\sum_{i\in V}X_i)$, where $\MA$ is a (resp., finite-dimensional) $C^*$-algebra with tracial state $\tau$ and  $X_i \in \MA$  ($i \in [n]$) are projectors satisfying $X_i X_j = 0$ for all $\{i,j\} \in E$.
Moreover, as we now see, the  parameter $\xib{stab}{*}(G)$ coincides with the projective packing number and the parameters $\xib{stab}{*}(G)$ and $\xib{stab}{\infty}(G)$ upper bound the quantum stability numbers.

\lemalphap
\begin{proof}
By~\eqref{eqap}, $\alpha_p(G)$ is the largest value of $L(\sum_{i\in V}x_i)$ over linear functionals $L$ that are normalized trace evaluations at projectors $\bX \in (\S^d)^n$ (for some $d \in \N$) with $X_i X_j = 0$ for $\{i,j\} \in E$.
By convexity the optimum remains unchanged when considering a convex combination of such trace evaluations. In view of Theorem~\ref{propLfinitedim}(3), this optimum is precisely the parameter $\xib{stab}{*}(G)$. This shows equality $\alpha_p(G)=\xib{stab}{*}(G)$.

Consider a $C^*$-algebra $\mathcal A$ with tracial state $\tau$ and projectors $X^i_c\in \mathcal A$ ($i\in V,\ c\in [k]$) satisfying~\eqref{eqa1}-\eqref{eqa2}.
Then, setting  $X_i=\sum_{c\in [k]} X^i_c$ for $i\in V$, we obtain projectors  $X_i\in \mathcal A$ that satisfy $X_iX_j=0$ if $\{i,j\}\in E$.
Moreover, $\tau(\sum_{i\in V}X_i)=\sum_{c\in [k]} \tau(\sum_{i\in V}X^i_c)=k$.
This shows $\xib{stab}{\infty}(G)\ge \alpha_{qc}(G)$ and, when restricting  $\mathcal A$ to be finite dimensional, $\xib{stab}{*}(G)\ge \alpha_q(G)$.
\end{proof}

Using Lemma~\ref{lemma:upperboundLw_new} one can verify  that $\xib{stab}{r}(G)$ converges to $\xib{stab}{\infty}(G)$ as $r~\rightarrow~\infty$, and  for $r \in \N \cup \{\infty\}$ the infimum in $\xib{stab}{r}(G)$ is attained.  Moreover, by Theorem~\ref{propextension}, if $\xib{stab}{r}(G)$ admits a flat optimal solution, then $\xib{stab}{r} = \xib{stab}{*}(G)$. 
Also, the first bound $\xib{stab}{1}(G)$ coincides with the theta number, since  $\xib{stab}{1}(G)=\las{stab}{1}(G)=\vartheta(G)$. Summarizing  we have 
$\alpha_{qc}(G)\le \xib{stab}{\infty}(G)$ and the following chain of inequalities
\[
\alpha_q(G)\leq \alpha_p(G)=\xib{stab}{*}(G)\leq \xib{stab}{\infty}(G)\leq \xib{stab}{r}(G)\leq  \xib{stab}{1}(G)=\vartheta(G).
\]

\subsubsection{Semidefinite programming bounds on the projective rank and tracial rank}

We now turn to the (quantum) chromatic numbers.
First recall the definition of the fractional chromatic number:
\[
\chi_f(G) := \min \Big\{ \sum_{S \in \mathcal S} \lambda_S : \lambda \in \R_+^\mathcal{S},\, \sum_{S\in \mathcal S: i\in S} \lambda_S = 1 \text{ for all } i\in V\Big\},
\]
where $\mathcal S$ is the set of stable sets of $G$. Clearly, $\chi_f(G)\le \chi(G)$. 
The following Lasserre type lower bounds for the classical chromatic number $\chi(G)$ are defined in~\cite{GL08}:
\[
\las{col}{r}(G) = \mathrm{inf} \big\{ L(1) : L\in \R\cx_{2r}^* \text{ positive},\,  L(x_i)=1\ (i \in V),\, L =0  \text{ on }  \mathcal I_{2r}(\mathcal H_G)\big\}. 
\]
By viewing $\chi_f(G)$ as minimizing $L(1)$ over linear functionals $L\in \R\cx^*$ that are conic combinations of evaluations at characteristic vectors of stable sets, we see that $ \las{col}{r}(G)\le \chi_f(G) $ for all $r \geq 1$. In~\cite{GL08} it is shown that $\las{col}{\alpha(G)}(G) = \chi_f(G)$. Moreover,  the order 1 bound coincides with the theta number:  $\las{col}{1}(G)=\vartheta(\overline G)$.

The  following parameter $\xi_f(G)$, called the \emph{projective rank}  of $G$, was introduced in~\cite{MR16} as a lower bound on the quantum chromatic number $\chi_q(G)$:
\begin{align*}
\xi_f(G) := \mathrm{inf} \big\{ \frac{d}{r} : \; & d,r\in \N,\ X_1, \ldots, X_n \in \mathcal S^d, \  \Tr(X_i)=r\ (i\in V), \\
& X_i^2 = X_i \ (i \in V), \ X_i X_j = 0\ (\{i,j\} \in E) 
 \big\}.
\end{align*}

\begin{proposition}[\cite{MR16}]\label{propprojrank}
For any graph $G$ we have $\xi_f(G)\le\chi_q(G)$.
\end{proposition}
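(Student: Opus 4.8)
The plan is to exhibit an explicit feasible point for the optimization problem defining $\xi_f(G)$ out of any finite dimensional realization of a perfect quantum $k$-coloring, where $k=\chi_q(G)$. First I would invoke the $C^*$-algebraic reformulation of $\chi_q(G)$ recorded just above: if $\chi_q(G)=k$, there is a finite dimensional $C^*$-algebra $\mathcal A$ and projectors $\{E_i^c:i\in V,\ c\in[k]\}\subseteq\mathcal A$ with $\sum_{c\in[k]}E_i^c=1$ for all $i\in V$ and $E_i^cE_j^c=0$ whenever $\{i,j\}\in E$. Once these relations are in hand the tracial state on $\mathcal A$ is irrelevant, so by Artin--Wedderburn theory I would replace $\mathcal A$ by a full matrix algebra: $\mathcal A$ being a direct sum of matrix algebras, it has a unital $*$-embedding $\pi\colon\mathcal A\hookrightarrow M_N(\mathbb C)$ for a suitable $N\in\mathbb N$, and replacing each $E_i^c$ by $\pi(E_i^c)$ I may assume from the outset that the $E_i^c$ are Hermitian projection matrices in $M_N(\mathbb C)$ satisfying $\sum_{c\in[k]}E_i^c=I_N$ and $E_i^cE_j^c=0$ for $\{i,j\}\in E$.

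The key step --- and the only one that requires an idea rather than bookkeeping --- is to set, for each $i\in V$,
\[
X_i\ :=\ \bigoplus_{c=1}^{k}E_i^c\ \in\ M_{kN}(\mathbb C),
\]
the block-diagonal matrix with the $k$ color blocks stacked. Summing over colors is precisely what makes the ranks independent of $i$: since the blocks are mutually orthogonal projections, $X_i$ is again a Hermitian projection, $X_iX_j=\bigoplus_{c}E_i^cE_j^c=0$ for every edge $\{i,j\}\in E$, and
\[
\Tr(X_i)\ =\ \sum_{c=1}^{k}\Tr(E_i^c)\ =\ \Tr\Big(\sum_{c=1}^{k}E_i^c\Big)\ =\ \Tr(I_N)\ =\ N
\]
for all $i\in V$. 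Thus $(X_i)_{i\in V}$ is feasible for the program defining $\xi_f(G)$ with ambient dimension $d=kN$ and common rank $r=N$, whence $\xi_f(G)\le d/r=k=\chi_q(G)$, which is the claim.

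I do not expect a genuine obstacle here. The two places that call for care are both routine: passing from an abstract finite dimensional $C^*$-algebra to honest matrices (handled by Artin--Wedderburn together with any faithful finite dimensional $*$-representation, and noting that the tracial state plays no role in the argument), and --- should the definition of $\xi_f$ be phrased with real symmetric rather than complex Hermitian matrices --- a standard realification $\mathbb C^N\cong\mathbb R^{2N}$, which doubles both $d$ and $r$ and hence preserves the ratio $d/r$. The one conceptual point, worth flagging, is the choice $X_i=\bigoplus_c E_i^c$: picking a single color class $E_i^1$ already satisfies $E_i^1E_j^1=0$ along edges, but only taking the direct sum over all $k$ colors yields projections of a common rank, as required by the definition of $\xi_f(G)$.
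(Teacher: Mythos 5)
Your proof is correct, and it takes a genuinely different route from the paper's. The paper's argument first invokes a result from \cite{CMNSW07} stating that in the matrix formulation of $\chi_q(G)$ one may assume without loss of generality that all projectors $X_i^c$ share a common rank $r$; it then extracts a \emph{single} color class $\{X_i^c\}_{i\in V}$ (any fixed $c$) as the feasible point for $\xi_f(G)$, and reads off $d/r=k$ from $d=\rank(I)=\sum_c\rank(X_i^c)=kr$. Your construction sidesteps the equal-rank lemma entirely: by forming the block-diagonal sum $X_i=\bigoplus_{c=1}^k E_i^c$ over \emph{all} colors, the partition-of-unity relation $\sum_c E_i^c=I_N$ forces $\Tr(X_i)=N$ automatically for every $i$, the projection and orthogonality-on-edges properties are inherited blockwise, and the ratio is $kN/N=k$. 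What the paper's approach buys is brevity and a smaller ambient dimension ($N$ rather than $kN$), at the cost of citing an external normalization result; what yours buys is self-containment --- you need only the $C^*$-algebraic reformulation of $\chi_q(G)$ stated just before the proposition and Artin--Wedderburn to land in concrete matrices, and no auxiliary rank-equalization step. Your flagged caveats (the tracial state playing no role, and the real-vs-Hermitian realification doubling both $d$ and $r$) are both correctly handled.
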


\begin{proof}
Set $k=\chi_q(G)$. It is shown in~\cite{CMNSW07} that in the definition of $\chi_q(G)$ from~\eqref{eqc1}--\eqref{eqc2}, one may assume w.l.o.g.\ that all matrices $X^c_i$ have the same rank, say, $r$. Then, for any given color $c\in [k]$, the matrices $X^c_i$ ($i\in V$) provide a feasible solution to $\xi_f(G)$ with value $d/r$. Finally, $d/r=k$ holds since by~\eqref{eqc1}--\eqref{eqc2} we have $d=\rank (I)=\sum_{c=1}^k\rank (X^c_i)= kr$. 
\end{proof}

In~\cite[Prop. 5.11]{PSSTW16} it is shown that the projective rank can equivalently be defined as 
\begin{align*}
\xi_f(G) = \mathrm{inf} \big\{ \lambda : \; &\mathcal A \text{ is a finite dimensional } C^*\text{-algebra with tracial state } \tau,\\
&X_i \in \mathcal A \text{ projector with } \tau(X_i) = 1/\lambda \, (i\in V), \, X_i X_j = 0 \ (\{i,j\} \in E)\big\}. 
\end{align*}
They also define the \emph{tracial rank} $\xi_{tr}(G)$ of $G$ as the parameter obtained by omitting in the above definition of $\xi_f(G)$ the restriction that $\mathcal A$ has to be finite dimensional. The motivation for the parameter $\xi_{tr}(G)$  is that it lower bounds the \emph{commuting} quantum chromatic number~\cite[Thm.~5.11]{PSSTW16}: $\xi_{tr}(G)\le \chi_{qc}(G)$.

In view of Theorems~\ref{propLinfinitedim} and~\ref{propLfinitedim}, we  obtain the following reformulations: 
\begin{align*}
\xi_{f}(G) = \mathrm{inf} \big\{ L(1) : \; & L\in \R\langle {\bf x}\rangle^* \text{ tracial, symmetric, positive}, \, \rank(M(L))<\infty,\\
&L(x_i)=1\ (i \in V),\, L = 0 \text{ on } \mathcal I(\mathcal H_G)\big\},
\end{align*}
and $\xi_{tr}(G)$ is obtained by the same program without the restriction $\rank(M(L)) < \infty$. 
In addition, using Theorem~\ref{propLfinitedim}(3), we see that in this formulation of $\xi_f(G)$ 
 we can equivalently optimize over all $L$ that are conic combinations of trace evaluations at projectors $X_i \in \S^d$ (for some $d \in \N$)  satisfying $X_i X_j = 0$ for all  $\{i,j\} \in E$. 
If we restrict the optimization to {\em scalar} evaluations ($d=1$) we obtain the fractional chromatic number. This shows that the projective rank can be seen as the noncommutative analogue of the fractional chromatic number, as was already observed in~\cite{MR16,PSSTW16}.

The above formulations of the parameters $\xi_{tr}(G)$ and $\xi_f(G)$ in terms of linear functionals also show that they fit within the following hierarchy $\smash{\{\xib{col}{r}(G)\}_{r\in \N\cup\{\infty\}}}$, defined as the noncommutative tracial analogue of the hierarchy  $\{\las{col}{r}(G)\}_{r}$: 
\begin{align*}
\xib{col}{r}(G) = \mathrm{inf}  \big\{ L(1) : \; & L\in \R\langle {\bf x}\rangle_{2r}^* \text{ tracial, symmetric, and positive},  \\
&L(x_i)=1\ (i \in V),\, L = 0 \text{ on } \MI_{2r}(\mathcal H_G)  \big\}.
\end{align*}
Again, $\xib{col}{*}(G)$ is the parameter obtained by adding the constraint $\rank (M(L)) <\infty$ to the program defining $\xib{col}{\infty}(G)$. By the above discussion the following holds.
\lemchif
Using Lemma~\ref{lemma:upperboundLw_new} one can verify  that the parameters $\xib{col}{r}(G)$ converge to $\xib{col}{\infty}(G)$. 
Moreover, by Theorem~\ref{propextension}, if $\xib{col}{r}(G)$ admits a flat optimal solution, then we have $\xib{col}{r} = \xib{col}{*}(G)$. 
Also,  the parameter $\xib{col}{1}(G)$ coincides with $\las{col}{1}(G)=\vartheta(\overline G)$. Summarizing we have $\xib{col}{\infty}(G)=\xi_{tr}(G)\le \chi_{qc}(G)$ and the following chain of inequalities
\[
\vartheta(\overline G)=\xib{col}{1}(G) \le \xib{col}{r}(G)\le \xib{col}{\infty}(G)=\xi_{tr}(G)\le \xib{col}{*}(G)=\xi_f(G) \le \chi_q(G).
\]

Observe that the bounds $\las{col}{r}(G)$ and $\xib{col}{r}(G)$ remain below the fractional chromatic number $\chi_f(G)$, since
$\xi_f(G)= \xib{col}{*}(G)\le \las{col}{*}(G)=\chi_f(G)$. Hence, these bounds are weak if $\chi_f(G)$ is close to $\vartheta(\overline G)$ and far from $\chi(G)$ or $\chi_q(G)$. In the classical setting this is the case, e.g., for the class of Kneser graphs $G=K(n,r)$, with vertex set the set of  all $r$-subsets of $[n]$ and having an edge between any two disjoint $r$-subsets. By results of Lov\'asz~\cite{Lo78,Lo79}, the fractional chromatic number is $n/r$, 
which is known to be equal to $\vartheta(\overline {K(n,r)})$, while the chromatic number is $n-2r+2$. In~\cite{GL08} this was used as a motivation to define a new hierarchy  of lower bounds $\{\Lambda_r(G)\}$ on the chromatic number that can go beyond the fractional chromatic number. In Section~\ref{sec:chromatic via stability} we recall this approach and show that its extension  to the tracial setting recovers the hierarchy $\{\gamma_r^\mathrm{col}(G)\}$ introduced in Section~\ref{sec:gamma}. We also show how a similar technique can be used to recover the hierarchy $\{\gamma_r^\mathrm{stab}(G)\}$.

\subsubsection{A link between \texorpdfstring{$\xib{stab}{r}(G)$ and $\xib{col}{r}(G)$}{xi stab and xi col}} 
\label{seclinkG}

In~\cite[Thm.~3.1]{GL08} it is shown that, for any $r\ge 1$,  the bounds $\las{stab}{r}(G)$ and $\las{col}{r}(G)$ satisfy 
$\las{stab}{r}(G) \las{col}{r}(G) \geq |V|$, with equality if $G$ is vertex-transitive, which  extends a well-known property of the theta number (case $r=1$).  The same holds for the noncommutative analogues $\xib{stab}{r}(G)$ and $\xib{col}{r}(G)$. 

\begin{lemma}\label{lemtransitive}
For any graph $G=(V,E)$ and $r\in \N\cup\{\infty,*\}$ we have
$\xib{stab}{r}(G)\xib{col}{r}(G)\ge |V|,$
with equality if $G$ is vertex-transitive.
\end{lemma}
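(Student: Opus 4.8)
The plan is to adapt the classical argument of~\cite[Thm.~3.1]{GL08}. The point to exploit is that the programs defining $\xib{stab}{r}(G)$ and $\xib{col}{r}(G)$ optimize over the \emph{same} set of constraints on $L\in\R\langle \mathbf x\rangle_{2r}^*$ --- traciality, symmetry, positivity, vanishing on $\MI_{2r}(\mathcal H_G)$, and (when $r=*$) $\rank M(L)<\infty$ --- and differ only in the normalization ($L(1)=1$ versus $L(x_i)=1$ for all $i$) and in the objective. All these constraints are preserved under multiplying $L$ by a positive scalar, and $\rank$ is unchanged by such scaling. We may assume $V\neq\emptyset$ (otherwise both sides are $0$); then $\xib{stab}{r}(G)\ge\alpha(G)\ge 1>0$ (e.g.\ via the one-dimensional trace evaluation at the indicator vector of a maximum stable set), and every $L$ feasible for $\xib{col}{r}(G)$ has $L(1)\ge 1$, since $L(x_i^2)=L(x_i)=1$ (using $x_i-x_i^2\in\mathcal H_G$), so the principal submatrix of $M_r(L)$ on rows and columns indexed by $1$ and $x_i$ equals $\left(\begin{smallmatrix}L(1)&1\\1&1\end{smallmatrix}\right)\succeq 0$.

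For the inequality $\xib{stab}{r}(G)\,\xib{col}{r}(G)\ge|V|$, given any $L$ feasible for $\xib{col}{r}(G)$ I would rescale it to $L'=L/L(1)$: this satisfies $L'(1)=1$, is still feasible for the common constraints (and for $r=*$ has $\rank M(L')=\rank M(L)<\infty$), hence is feasible for $\xib{stab}{r}(G)$ with objective value $L'\!\big(\sum_{i\in V}x_i\big)=|V|/L(1)$. Thus $\xib{stab}{r}(G)\ge|V|/L(1)$ for every such $L$, and taking the infimum over $L$ gives $\xib{stab}{r}(G)\ge|V|/\xib{col}{r}(G)$.

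For the reverse inequality under vertex-transitivity, I would symmetrize. Let $\Sigma=\mathrm{Aut}(G)$; it acts transitively on $V$ and induces $*$-algebra automorphisms of $\R\langle\mathbf x\rangle$ via $x_i\mapsto x_{\sigma(i)}$ that preserve degree, fix the set $\mathcal H_G$ (as a graph automorphism sends edges to edges), and fix $\sum_{i\in V}x_i$. Hence for any $L$ feasible for $\xib{stab}{r}(G)$ each $L\circ\sigma$ is again feasible with the same objective value, and $M(L\circ\sigma)$ arises from $M(L)$ by a simultaneous permutation of rows and columns, so $\rank M(L\circ\sigma)=\rank M(L)$. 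The average $\bar L=\tfrac1{|\Sigma|}\sum_{\sigma\in\Sigma}L\circ\sigma$ is then feasible as well, with $\rank M(\bar L)\le|\Sigma|\,\rank M(L)$ still finite in the case $r=*$; by transitivity $\bar L(x_i)$ is independent of $i$, so $\bar L(x_i)=\tfrac1{|V|}\bar L\!\big(\sum_{j}x_j\big)=\rho/|V|$ where $\rho:=L\!\big(\sum_{j}x_j\big)$. Provided $\rho>0$, the rescaled functional $L''=\tfrac{|V|}{\rho}\bar L$ satisfies $L''(x_i)=1$ for all $i$ and is feasible for $\xib{col}{r}(G)$ with objective $L''(1)=|V|/\rho$, so $\xib{col}{r}(G)\le|V|/\rho$. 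Taking the supremum over feasible $L$ --- which may be taken with $\rho>0$, as noted above --- gives $\xib{col}{r}(G)\le|V|/\xib{stab}{r}(G)$, and combined with the previous inequality this yields equality.

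The steps are essentially routine verifications that rescaling by a positive scalar and $\Sigma$-averaging preserve each defining property of a feasible functional (traciality, symmetry, positivity on the quadratic module, vanishing on $\MI_{2r}(\mathcal H_G)$). The only mildly delicate point is the finite-rank condition in the case $r=*$ under averaging, and this causes no real difficulty: a sum of finitely many positive semidefinite matrices of finite rank again has finite rank. I do not anticipate a genuine obstacle.
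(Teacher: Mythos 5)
Your proposal is correct and follows essentially the same route as the paper's proof: rescale a feasible $L$ for $\xib{col}{r}(G)$ by $1/L(1)$ to get the inequality, and symmetrize-then-rescale a feasible $L$ for $\xib{stab}{r}(G)$ to get the reverse inequality in the vertex-transitive case. You simply spell out details the paper leaves implicit (positivity of $L(1)$ and of the objective value, and preservation of the finite-rank condition under averaging), all of which check out.
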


\begin{proof}
Let $L$ be feasible for $\xib{col}{r}(G)$. Then $\tilde L = L/L(1)$ provides a solution to $\xib{stab}{r}(G)$ with value 
$\tilde L\big(\sum_{i\in V}x_i\big)= |V|/L(1)$, implying that $\xib{stab}{r}(G)\ge |V|/L(1)$ and therefore $\xib{stab}{r}(G)\xib{col}{r}(G)\ge |V|$.

Assume  $G$ is vertex-transitive. Let $L$ be a feasible solution for $\xib{stab}{r}(G)$. As $G$ is vertex-transitive we may assume (after symmetrization) that  $L(x_i)$ is constant, set  $L(x_i)=:1/\lambda$ for all $i\in V$, so that the objective value of $L$ for $\xib{stab}{r}(G)$ is $|V|/\lambda$.  Then $\tilde L = \lambda L$ provides a  feasible solution for $\xib{col}{r}(G)$ with value $\lambda$, implying $\xib{col}{r}(G) \le \lambda$. This implies $\xib{col}{r}(G)\xib{stab}{r}(G)\le |V|$.
\end{proof}
For vertex-transitive $G$, the inequality $\xi_f(G)\alpha_q(G) \leq |V|$ is shown in \cite[Lem.~6.5]{MR16};  it can be recovered from the $r=*$ case of Lemma~\ref{lemtransitive}  and $\alpha_q(G) \leq \alpha_p(G)$. 

\subsubsection{Comparison to existing semidefinite programming bounds}\label{seccompare}

By adding the inequalities $L(x_i x_j) \geq 0$, for all $i,j \in V$, to $\xib{col}{1}(G)$, we obtain the strengthened theta number $\vartheta^+(\overline G)$ (from~\cite{Szegedy}). Moreover, if we add the constraints 
\begin{align}
L(x_ix_j) &\ge 0 & \text{ for } i\ne j\in V, \label{eqij}\\
\sum_{j\in C} L(x_ix_j) &\le 1 & \text{ for } i\in V,\label{eqiC}\\
L(1) + \sum_{i\in C, j\in C'}L(x_ix_j)&\geq |C| + |C'| & \text{ for } C,C' \text{ distinct cliques in } G\label{eqCC'}
\end{align}
to the program defining the parameter $\xib{col}{1}(G)$, then we obtain the parameter $ \xi_\mathrm{SDP}(G)$, which is  introduced  in~\cite[Thm.~7.3]{PSSTW16} as a lower bound on $\xi_\mathrm{tr}(G)$. We will now show that the inequalities~\eqref{eqij}--\eqref{eqCC'} are in fact valid for $\xib{col}{2}(G)$, which implies 
\[
\xib{col}{2}(G) \geq \xi_\mathrm{SDP}(G) \geq \vartheta^+(\overline G).
\]
For this, given a clique $C$ in $G$, we define the polynomial
$g_C:=1-\sum_{i\in C}x_i\in \R\ncx$.
Then~\eqref{eqiC} and~\eqref{eqCC'} can be reformulated as $L(x_i g_C) \geq 0$ and $L(g_C g_{C'}) \geq 0$, respectively, using the fact that $L(x_i) = L(x_i^2)=1$ for all $i \in V$. Hence, to show that any feasible $L$  for $\xib{col}{2}(G)$ satisfies~\eqref{eqij}-\eqref{eqCC'}, it suffices to show Lemma~\ref{lemineqSG} below.
Recall that a commutator is a polynomial of the form $[p,q]=pq-qp$ with $p,q\in\R\ncx$. We denote the set of linear combinations of commutators $[p,q]$ with $\deg(pq)\le r$ by $\Theta_{r}$. 

\begin{lemma}\label{lemineqSG}
Let  $C$ and $C'$ be  cliques in a graph $G$ and let $i,j\in V$. Then we have
\[
g_C \in \MM_2(\emptyset)+\MI_2(\mathcal H_G), \text{ and } \ x_ix_j,\  x_ig_C,\ g_Cg_{C'}  \in \MM_4(\emptyset)+\mathcal I_4(\mathcal H_G)+\Theta_4.
\]
\end{lemma}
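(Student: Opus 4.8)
The plan is to verify each of the four claimed memberships by exhibiting explicit algebraic identities, using only the defining relations of $\mathcal H_G = \{x_i - x_i^2 : i \in V\} \cup \{x_i x_j : \{i,j\} \in E\}$ together with the freedom to add commutators modulo $\Theta_4$. The underlying principle in all cases is the same: a clique $C$ in $G$ means $x_i x_j \in \mathcal H_G$ (hence lies in $\mathcal I_2(\mathcal H_G)$) for all distinct $i,j \in C$, and each $x_i$ is ``almost idempotent'' in the sense that $x_i - x_i^2 \in \mathcal I_2(\mathcal H_G)$. So the key observation is that the polynomial $g_C = 1 - \sum_{i\in C} x_i$ behaves, modulo the ideal, like $g_C^2$, since expanding $g_C^2$ produces $1 - 2\sum_i x_i + \sum_i x_i^2 + \sum_{i\neq j} x_ix_j$ and the cross terms $x_ix_j$ (with $i\neq j$ in a clique) lie in $\mathcal I_2(\mathcal H_G)$ while $x_i^2 \equiv x_i$; thus $g_C^2 \equiv 1 - \sum_i x_i = g_C$ modulo $\mathcal I_2(\mathcal H_G)$, and actually $g_C - g_C^2 \in \mathcal I_2(\mathcal H_G)$. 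This immediately gives $g_C = g_C^2 + (g_C - g_C^2) \in \MM_2(\emptyset) + \MI_2(\mathcal H_G)$, which is the first claim.

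For $x_ix_j$ with arbitrary $i,j \in V$: if $\{i,j\}\in E$ or $i=j$ handle separately ($x_ix_j \in \mathcal I_2(\mathcal H_G)$ directly, or $x_i^2 = x_i^2$ with $x_i = x_i^2 + (x_i-x_i^2)$, and $x_i = (x_i)$... actually $x_i$ itself is not obviously a sum of squares, so I would instead write $x_i x_j$ when $i=j$ as $x_i^2$ and note $x_i^2 \in \MM_2(\emptyset)$ already). For the generic case $i \neq j$, $\{i,j\}\notin E$, I would use the idempotency to write $x_ix_j \equiv x_i^2 x_j^2$ modulo $\mathcal I_4(\mathcal H_G)$ (since $x_ix_j - x_i^2 x_j^2 = (x_i - x_i^2)x_j + x_i^2(x_j - x_j^2)x_j \cdot(\ldots)$, expanding carefully) and then $x_i^2 x_j^2 = x_i(x_ix_j)x_j$; next $x_ix_j = x_jx_i + [x_i,x_j]$, so modulo $\Theta_4$ we may rewrite things to produce $(x_ix_j x_i x_j) = (x_ix_j)^*(x_ix_j)$ type squares, i.e.\ $x_i^2x_j^2 \equiv x_jx_ix_ix_j \equiv (x_ix_j)^*(x_ix_j) \in \MM_4(\emptyset)$ after absorbing a commutator. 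The precise bookkeeping of which commutator correction is needed, and checking all degrees stay $\le 4$, is the step requiring care.

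For $x_ig_C$ and $g_Cg_{C'}$ I would reduce to the previous patterns. Write $x_i g_C = x_i^2 g_C$ modulo $\mathcal I_4$, then $x_i^2 g_C = x_i g_C x_i$ modulo $\Theta_4$ (moving one $x_i$ past $g_C$ via a commutator of degree $\le 4$), and since $g_C$ is symmetric this is $x_i g_C x_i = (x_i)^* g_C (x_i)$; because $g_C \in \MM_2(\emptyset) + \MI_2(\mathcal H_G)$ by the first part, sandwiching by $x_i$ keeps us in $\MM_4(\emptyset) + \mathcal I_4(\mathcal H_G) + \Theta_4$ (the ideal part stays an ideal element of degree $\le 4$, the commutator corrections are degree $\le 4$). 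Similarly $g_Cg_{C'} \equiv g_C g_{C'} g_C^{\phantom{2}}$ is not quite right; instead use $g_C \equiv g_C^2$ so $g_Cg_{C'} \equiv g_C^2 g_{C'} \equiv g_C g_{C'} g_C$ modulo $\Theta_4$ (commuting one factor $g_C$ of degree $1$ past $g_{C'}$ of degree $1$, a commutator of degree $2 \le 4$), and $g_C g_{C'} g_C = g_C^* g_{C'} g_C$, which lies in $\MM_4(\emptyset) + \mathcal I_4(\mathcal H_G) + \Theta_4$ by the first part applied to $g_{C'}$. I expect the main obstacle to be purely organizational: keeping track of the commutator and ideal corrections through these substitutions while certifying that every intermediate polynomial has degree at most $4$, since each replacement of $x_i$ by $x_i^2$ or $g_C$ by $g_C^2$ raises degree and one must check that the initial degree-$2$ objects leave exactly enough room.
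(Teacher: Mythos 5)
Your plan follows essentially the same approach as the paper: exploit the identities $g_C \equiv g_C^2$ and $x_i \equiv x_i^2$ modulo $\mathcal I_2(\mathcal H_G)$ together with commutator corrections from $\Theta_4$ to reduce each target polynomial to a Hermitian square (such as $g_C^2$, $x_j x_i^2 x_j$, $x_i g_C^2 x_i$, or $g_C g_{C'}^2 g_C$) plus ideal and commutator error terms, and the paper merely packages these reduction chains into single closed-form identities. Two of your intermediate steps need minor repair --- the expansion of $x_i x_j - x_i^2 x_j^2$ is garbled (the correct identity is $(x_i - x_i^2)x_j + x_i^2(x_j - x_j^2)$), and the passage $x_i^2 g_C \to x_i g_C x_i$ is absorbed by the cyclic commutator $[x_i, x_i g_C]$ rather than by moving $x_i$ past $g_C$ --- but both are easily fixed and the degree bookkeeping does work out.
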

\begin{proof}
The claim $g_C\in\MM_2(\emptyset)+\MI_2(\mathcal H_G)$ follows from the identity
\begin{equation} \label{eq:gC is a projector mod ideal}
g_C=\Big(\underbrace{ 1-\sum_{i\in C}x_i}_{g_C}\Big)^2  +\underbrace{\sum_{i\in C} (x_i-x_i^2) +\sum_{i\ne j\in C} x_ix_j}_{h}=g_C^2+h,
\end{equation}
where $h\in \mathcal I_2(\mathcal H_G)$.
We also have
\begin{align*}
x_ix_j &= x_ix_j^2x_i +x_j(x_i-x_i^2)+x_i^2(x_j-x_j^2) +[x_i,x_ix_j^2]+[x_i-x_i^2,x_j], \\
x_ig_C &= x_ig_C^2x_i+ g_C^2(x_i-x_i^2) + [x_i-x_i^2,g_C^2] +[x_i,x_ig_C^2], 
\end{align*}
and, writing analogously $g_{C'} =g_{C'}^2+h'$ with $h'\in \mathcal I_2(\mathcal H_G)$, we have
\[
g_Cg_{C'}= g_C g_{C'}^2 g_C + [g_C,g_C g_{C'}^2]+[h,g_{C'}^2]+g_C^2 h'+hh'+ g_{C'}^2 h.  \qedhere
\]
\end{proof}

Using $\xi_{\rm{SDP}}(G)$,  it is shown in~\cite[Thm.~7.4]{PSSTW16} that for the odd cycle $C_{2n+1}$, the tracial rank satisfies $\xib{col}{\infty}(C_{2n+1})=(2n+1)/n$. Combining this with Lemma~\ref{lemtransitive} gives $n = \xib{stab}{\infty}(C_{2n+1}) \geq \alpha_{qc}(C_{2n+1})$. Equality holds since $\alpha_{qc}(C_{2n+1})\ge \alpha(C_{2n+1})=n$. 

\subsection{Links between the bounds \texorpdfstring{$\gamma^{\rm col}_r(G)$, $\xi_r^\mathrm{col}(G)$, $\gamma^{\rm stab}_r(G)$, and $\xi_r^\mathrm{stab}(G)$}{gamma col, xi col, gamma stab, and xi stab}} \label{sec:chromatic via stability}

Here, in this last section, we make the link between the hierarchies $\{\xib{stab}{r}(G)\}$ (resp. $\{\xib{col}{r}(G)\}$) and $\{\gamma^{\rm stab}_r(G)\}$ (resp. $\{\gamma^{\rm col}_r(G)\}$). The key fact is the interpretation of the coloring and stability numbers in terms of certain graph products. 

We start with the (quantum) coloring number. For an integer $k$, recall that the Cartesian product $G\Box K_k$  is the graph with vertex set $V\times [k]$, where the vertices $(i,c)$ and $(j,c')$ are adjacent if ($\{i,j\}\in E$ and $c=c'$) or ($i=j$ and $c\ne c'$). 
The  following is a well-known reduction of the chromatic number $\chi(G)$  to the stability number of the Cartesian product $G\Box K_k$:
$
\chi(G)=\min\big\{k\in \N: \alpha(G \square K_k)=|V|\big\}$.
It was used  in~\cite{GL08} to define the following lower bounds on the chromatic number:
\[
\Lambda_r(G) = \min \big\{ k\in \N: \las{stab}{r}( G\Box K_k) = |V|\big\},
\]
where it was also shown that $\las{col}{r}(G) \leq \Lambda_r(G)\le \chi(G)$  for all $r\ge 1$, with  equality
$\Lambda_{|V|}(G)=\chi(G)$.
Hence the bounds  $\Lambda_r(G)$ may go beyond the fractional chromatic number. This is the case for the above mentioned Kneser graphs; see~\cite{GL08b} for other graph instances.

The above reduction from coloring to stability number has been extended to the quantum setting by~\cite{MR16}, where it is shown that
$\chi_q(G)=\min\{k\in \N: \alpha_q(G\Box K_k)=|V|\}$.
It is therefore natural to use the upper bounds  $\xib{stab}{r}(G\Box K_k)$ on $\alpha_q(G\Box K_k)$ in order to get  the following lower bounds on the quantum coloring number:
\begin{equation}\label{eqXicol}
\min\{k: \xib{stab}{r}(G\Box K_k)=|V|\},
\end{equation}
which are thus  the noncommutative analogues of the bounds $\Lambda_r(G)$.
Observe that,  for any integer $k\in \N$ and $r \in \N \cup \{\infty, *\}$, we have
$
\xib{stab}{r}(G\Box K_k)\le |V|,
$
which follows from Lemma~\ref{lemineqSG} and the fact that  the cliques $C_i=\{(i,c): c\in [k]\}$, for $i\in V$, cover all vertices in $G\Box K_k$. Let
$
\mathcal C_{G\Box K_k} = \big\{g_{C_i} : i\in V\big\}$, where $g_{C_i} = 1-\sum_{c\in [k]}x^c_i$,
denote the set of polynomials corresponding to  these cliques. We now show  that the parameters~\eqref{eqXicol} coincide in fact with $\gamma_r^\mathrm{col}(G)$ for all $r \in \N \cup \{\infty\}$. For this observe first that the quadratic polynomials in the set $\smash{\mathcal H^{\rm col}_{G,k}}$ correspond precisely to the edges of $G\Box K_k$, and the projector constraints are included in $\MI_{2}(\mathcal H_{G,k}^\mathrm{col})$ (see Section~\ref{sec:gamma}), so that 
$\smash{\MI_{2r}(\mathcal H^{\rm col}_{G,k}) = \mathcal I_{2r}(\mathcal H_{G\Box K_k} \cup\mathcal C_{G\Box K_k})}$.
We will also use the following result.
 
\begin{lemma}\label{lemXi}
Let $r\in \N\cup \{\infty,*\}$ and assume $L$ is feasible for $\xib{stab}{r}(G\Box K_k)$.
Then, we have $L(\sum_{i\in V, c\in [k]}x_i^c)=|V|$  if and only if $L=0$ on $\MI_{2r}(\mathcal C_{G\Box K_k})$.
\end{lemma}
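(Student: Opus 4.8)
The plan is to establish both implications directly, using the fact that $L$ is positive, tracial, and that the cliques $C_i = \{(i,c) : c \in [k]\}$ partition the vertex set $V \times [k]$ of $G\Box K_k$. Recall $g_{C_i} = 1 - \sum_{c\in[k]} x_i^c$, so that $\sum_{i\in V, c\in[k]} x_i^c = |V| - \sum_{i\in V} g_{C_i}$; hence the condition $L\big(\sum_{i,c} x_i^c\big) = |V|$ is equivalent to $\sum_{i\in V} L(g_{C_i}) = 0$.

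For the ``if'' direction, suppose $L = 0$ on $\MI_{2r}(\mathcal C_{G\Box K_k})$. Since each $g_{C_i}$ lies in this ideal (as $g_{C_i} = 1 \cdot g_{C_i}$), we get $L(g_{C_i}) = 0$ for every $i$, so $\sum_i L(g_{C_i}) = 0$ and therefore $L\big(\sum_{i,c} x_i^c\big) = |V|$.

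For the ``only if'' direction, suppose $L\big(\sum_{i,c} x_i^c\big) = |V|$, i.e.\ $\sum_{i\in V} L(g_{C_i}) = 0$. The key point is that each individual term $L(g_{C_i})$ is nonnegative, which forces each to be zero. Indeed, by Lemma~\ref{lemineqSG} applied to the clique $C_i$ (and using $\MI_2(\mathcal H_{G\Box K_k}) \subseteq \MI_{2r}(\mathcal H_{G\Box K_k})$), we have $g_{C_i} \in \MM_2(\emptyset) + \MI_2(\mathcal H_{G\Box K_k})$; since $L$ is positive and zero on $\MI_{2r}(\mathcal H_{G\Box K_k})$, it follows that $L(g_{C_i}) \ge 0$. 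A sum of nonnegative reals that equals zero has all summands zero, so $L(g_{C_i}) = 0$ for every $i\in V$. Now I must upgrade this to $L = 0$ on all of $\MI_{2r}(\mathcal C_{G\Box K_k})$, i.e.\ $L(p\, g_{C_i}) = 0$ for every $p\in\R\ncx$ with $\deg(p\,g_{C_i}) \le 2r$. Here I use that $L(g_{C_i}) = 0$ together with the identity~\eqref{eq:gC is a projector mod ideal}, $g_{C_i} = g_{C_i}^2 + h_i$ with $h_i \in \MI_2(\mathcal H_{G\Box K_k})$: since $L$ vanishes on the ideal, $L(g_{C_i}^2) = L(g_{C_i}) = 0$. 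By the tracial Cauchy--Schwarz inequality for the positive semidefinite bilinear form $(p,q)\mapsto L(q^*p)$ (equivalently, positivity of the moment matrix $M(L)$), $|L(p\,g_{C_i})|^2 = |L(g_{C_i}^* p^*)^* |^2 \le L(g_{C_i}^* g_{C_i})\, L(p p^*) = L(g_{C_i}^2)\, L(pp^*) = 0$ (using $g_{C_i}^* = g_{C_i}$ and, for the truncated case, that degrees stay within range since $\deg(g_{C_i}) = 1$). Hence $L(p\, g_{C_i}) = 0$ for all admissible $p$ and all $i$, so $L = 0$ on $\MI_{2r}(\mathcal C_{G\Box K_k})$.

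The main obstacle is the final Cauchy--Schwarz step in the truncated setting: one must check that the degrees of the polynomials involved stay within the range $2r$ where $L$ and its moment matrix are defined, and that $L(g_{C_i}^2) = 0$ genuinely holds at the truncated level (which it does, since $\deg(g_{C_i}^2) = 2 \le 2r$ and $h_i$ has degree $2$). Everything else is bookkeeping with the clique-cover structure and the identity~\eqref{eq:gC is a projector mod ideal}; the positivity of $L(g_{C_i})$ via Lemma~\ref{lemineqSG} is the conceptual heart.
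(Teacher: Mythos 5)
The ``if'' direction and the nonnegativity argument $L(g_{C_i})\ge 0$ are fine and essentially what the paper does. The gap is in your final Cauchy--Schwarz step for finite $r$. You want $L(p\,g_{C_i})=0$ for every $p$ with $\deg(p\,g_{C_i})\le 2r$, i.e.\ $\deg(p)\le 2r-1$. The inequality $|L(p\,g_{C_i})|^2\le L(g_{C_i}^2)\,L(pp^*)$ is Cauchy--Schwarz for the form $(a,b)\mapsto L(b^*a)$ applied to $a=g_{C_i}$ and $b=p^*$, but that form is only positive semidefinite on $\R\ncx_r$ (positivity of $L$ on $\MM_{2r}(\emptyset)$ means $M_r(L)\succeq 0$, i.e.\ $L(q^*q)\ge 0$ for $\deg(q)\le r$). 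So you need $\deg(p^*)=\deg(p)\le r$, not merely $\deg(g_{C_i})\le r$; your parenthetical ``degrees stay within range since $\deg(g_{C_i})=1$'' does not address the factor $L(pp^*)$, which is not even defined when $\deg(p)>r$ and not controlled by positivity when $\deg(p)>r$. Thus the argument fails for $p$ with $r<\deg(p)\le 2r-1$.

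The paper's proof repairs exactly this by induction on $\deg(w)$: writing $w=uv$ with $\deg(v)<\deg(u)\le r$ (possible since $\deg(w)\le 2r-1$), both $u$ and $vg_{C_i}$ have degree at most $r$, so Cauchy--Schwarz gives $|L(uvg_{C_i})|\le L(u^*u)^{1/2}L(v^*g_{C_i}^2v)^{1/2}$. Then $L(v^*g_{C_i}^2v)=L(v^*g_{C_i}v)$ modulo the ideal, and by traciality $L(v^*g_{C_i}v)=L(vv^*g_{C_i})$, which vanishes by the induction hypothesis since $\deg(vv^*)<\deg(w)$. Your one-shot Cauchy--Schwarz works for $r\in\{\infty,*\}$ (where $L$ is positive on all of $\R\ncx$) and for $\deg(p)\le r$, but not in general at a fixed finite level $r$; you would need to add the inductive splitting to complete the truncated case.
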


\begin{proof}
First: If $L=0$ on $\MI_{2r}(\mathcal C_{G\Box K_k})$, then $0=\sum_{i\in V}L(g_{C_i})= |V|- L(\sum_{i,c}x_i^c)$.

Conversely assume that 
$
0= L\big(\sum_{i\in V, c\in [k]}x_i^c\big) - |V|=\sum_{i\in V} L(g_{C_i})$.
We will show  $L=0$ on $\MI_{2r}(\mathcal C_{G\Box K_k})$.
For this we first observe that $g_{C_i}-(g_{C_i})^2\in \MI_2(\mathcal H_{G\Box K_k})$ by~\eqref{eq:gC is a projector mod ideal}. 
Hence $L(g_{C_i})=L(g_{C_i}^2)\ge 0$, which, combined with $\sum_i L(g_{C_i})=0$, implies $L(g_{C_i})=0$ for all $i\in V$.
Next we show $L(wg_{C_i})=0$ for all  words $w$ with degree at most $2r-1$, using induction on $\deg(w)$. The base case $w=1$ holds by the above. Assume now 
$w=uv$, where  $\deg(v)<\deg(u)\le r$. Using the positivity of $L$, the Cauchy-Schwarz inequality gives
$|L(uvg_{C_i})| \le {L(u^*u)}^{1/2}{L(v^*g_{C_i}^2v)}^{1/2}$.
Note that it suffices to show $L(v^*g_{C_i}v)=0$ since, using again~\eqref{eq:gC is a projector mod ideal}, this implies  $L(v^*g_{C_i}^2v)=0$ and thus  $L(uvg_{C_i}) =0$.
Using the tracial property of $L$ and the induction assumption, we see that $L(v^*g_{C_i}v)=L(vv^*g_{C_i})=0$ since 
$\deg(vv^*)<\deg(w)$.
\end{proof}

\propXicol
\begin{proof}
Let $L$ be a linear functional certifying  $\gamma^{\rm col}_r(G) \leq k$. Then $L$ is feasible for $\smash{\xib{stab}{r}(G\Box K_k)}$ and, as $L=0$ on $\mathcal I_{2r}(\mathcal C_{G\Box K_k})$, Lemma~\ref{lemXi} shows that $L(\sum_{i,c}x_i^c)=|V|$. This shows that $\xib{stab}{r}(G\Box K_k)=|V|$ and thus $\min\{k: \xib{stab}{r}(G\Box K_k) =|V|\}\le k$.

Conversely, assume $\xib{stab}{r}(G\Box K_k)=|V|$. Since the optimum is attained,  there exists a linear functional $L$ feasible for $\xib{stab}{r}(G\Box K_k)$ with $L(\sum_{i,c} x_i^c)=|V|$. Using Lemma~\ref{lemXi} we can conclude that $L$ is zero on $\smash{\mathcal I_{2r}(\mathcal C_{G\Box K_k})}$ and  thus also on $\smash{\mathcal I_{2r}(\mathcal H^{\rm col}_{G,k})}$. This shows $\smash{\gamma^{\rm col}_r(G)}\le k$.
\end{proof}
Note that the proof of 
Proposition~\ref{propXicol} also works in the commutative setting; this shows  that the sequence $\Lambda_r(G)$ corresponds to the usual Lasserre hierarchy for the feasibility problem defined by the equations~\eqref{eqc1}--\eqref{eqc2}, which is another way of showing $\Lambda_\infty(G) = \chi(G)$.  

We now turn to the (quantum) stability number. For $k \in \N$, consider the graph product $K_k\star G$, with vertex set $[k]\times G$, and with an edge between $(c,i)$ and $(c',j)$ when 
$(c\ne c',i=j)$ or $(c=c', i\ne j)$ or $(c\ne c', \{i,j\}\in E)$. The product $K_k\star G$ coincides with the homomorphic product $K_k\ltimes \overline G$ used in~\cite[Sec.~4.2]{MR16}, where it is shown that 
$
\alpha_q(G)=\max \big\{k \in \N: \alpha_q(K_k\star G)= k\big\}$.
This suggests using the upper bounds $\xib{stab}{r}(K_k\star G)$ on $\alpha_q(K_k\star G)$ to define the following upper bounds on $\alpha_q(G)$:
\begin{equation}\label{eqXistab}
\max\big\{k\in \N: \xib{stab}{r}(K_k\star G)=k\big\}.
\end{equation}
For each $c \in [k]$, the set $C^c=\{(c,i):i\in V\}$ is a clique in $K_k\star G$ and we let 
$\mathcal C_{K_k\star G}=\big\{g_{C^c} : c\in [k]\big\}$, where $g_{C^c} = 1-\sum_{i\in V}x^i_c$,
denote the set of polynomials corresponding to these cliques.
Since these $k$ cliques cover the vertex set of $K_k\star G$, we can use Lemma~\ref{lemineqSG}  to conclude
 $\xib{stab}{r}(K_k\star G)\le k$ for all $r\in\N\cup\{\infty,*\}$.
Again, observe that the quadratic polynomials in the set $\mathcal H^{\rm stab}_{G,k}$ correspond precisely to the edges of $K_k\star G$ and  that we have 
$
\mathcal I_{2r}(\mathcal H^{\rm stab}_{G,k})= \mathcal I_{2r}(\mathcal H_{K_k\star G}\cup \mathcal C_{K_k\star G}).
$
Based on this, one can show the analogue of Lemma~\ref{lemXi}: If $L$ is feasible for the program $\xib{stab}{r}(K_k\star G)$, 
then we have $L(\sum_{i,c}x^i_c)=k$ if and only if $L=0$ on $\mathcal I_{2r}(\mathcal C_{K_k\star G})$, which implies the following result.

\propXistab

We do not know whether the results of Propositions~\ref{propXicol} and~\ref{propXistab} hold for $r=*$, since we do not know whether the supremum is attained in the parameter $\xib{stab}{*}(\cdot)=\alpha_p(\cdot)$ (as was already observed in~\cite[p.~120]{Rob13}).
 Hence we can only claim the inequalities
\[
\gamma^{\rm col}_*(G)\ge 
\min\{k: \xib{stab}{*}(G\Box K_k)=|V|\} \quad \text{and} \quad 
\gamma^{\rm stab}_*(G) \leq 
\max\{k: \xib{stab}{*}(K_k\star G)=k\}.
\]

As mentioned above, we have $\las{col}{r}(G)\le \Lambda_r(G)$ for any $r\in\N$~\cite[Prop.~3.3]{GL08}. This result extends to the noncommutative setting and  the analogous result holds for the stability parameters. In other words the hierarchies $\{\gamma^{\rm col}_r(G)\}$ and $\{\gamma^{\rm stab}_r(G)\}$ refine the hierarchies $\{\xib{col}{r}(G)\}$ and $\xib{stab}{r}(G)\}$.

\propcolstabcompare
\begin{proof}
We may restrict to  $r\in \N$ since we have seen earlier that the inequalities hold for $r\in \{\infty,*\}$.
The proof for the coloring parameters is similar to the proof of~\cite[Prop.~3.3]{GL08} in the classical case and thus omitted. We show the inequality 
$ \xib{stab}{r}(G)\ge \gamma^{\rm stab}_r(G)$.
Set $k=\gamma^{\rm stab}_r(G)$ and, using Proposition~\ref{propXistab}, let $L\in \R\langle x^i_c: i\in V, c\in [k]\rangle_{2r}^*$ be optimal for
$\xib{stab}{r} (K_k\star G)=k$. That is, $L$ is tracial, symmetric, positive, and satisfies
$L(1)=1$, $L(\sum_{i,c}x^i_c)=k$,  and $L=0$ on 
$\mathcal I(\mathcal H_{K_k\star G}).$ It suffices now to  construct a tracial symmetric positive linear form $\hat L\in \R\langle x_i:i\in V\rangle_{2r}^*$  such that $\hat L(1)=1$, $\hat L(\sum_{i\in V}x_i)=k$, and $\hat L = 0$ on $\mathcal I_{2r}(\mathcal H_G)$, since this will imply $\xib{stab}{r}(G)\ge k$. For this, for any word $x_{i_1}\cdots x_{i_t}$ with degree $1 \leq t \le 2r$, we define
$
\hat L(x_{i_1}\cdots x_{i_t}) :=  \sum_{c\in [k]} L(x^{i_1}_c \cdots x^{i_t}_c)$.
Also, we set $\hat L(1) = L(1) = 1$. Then, we have 
$\hat L(\sum_{i\in V}x_i)=k$.
Moreover, one can easily check that $\hat L$ is indeed  tracial, symmetric, positive, and  vanishes on $\mathcal I_{2r}(\mathcal H_G)$.
\end{proof}


\appendix 

\section{Synchronous quantum correlations}\label{sec:sync}

We prove the following result by combining proofs from~\cite{SV16} (see also~\cite{MR16}) and~\cite{PSSTW16}.

\propcorrelationsynchronous
\begin{proof}
Suppose first that $(\psi, E_s^a, F_t^b)$ is a realization of $P$ in local dimension $d$. That is,
$\psi$ is a unit vector in $\C^d\otimes\C^d$, $E^a_s,F^b_t$ are $d\times d$ Hermitian positive semidefinite matrices such that 
$\sum_a E^a_s=\sum_b F^b_t=I$ for all $s,t$ and $P(a,b|s,t)=\psi^* (E^a_s\otimes F^b_t)\psi$ for all $(a,b,s,t)\in\Gamma$.
We will show $\cpsdr_\C(A_P) \leq d$. 

The Schmidt decomposition of the unit vector $\psi$  gives nonnegative scalars $\{\lambda_i\}$ and orthonormal bases $\{u_i\}$ and $\{v_i\}$ of $\C^d$ such that $\psi = \smash{\sum_{i=1}^d \sqrt{\lambda_i}} \, u_i \otimes v_i$. If we replace $\psi$ by $\smash{\sum_{i=1}^d \sqrt{\lambda_i}}\, v_i \otimes v_i$ and $E_s^a$ by $U^* E_s^a U$, where $U$ is the unitary matrix for which $u_i = Uv_i$ for all $i$, then we obtain a new realization 
$(\smash{\sum_{i=1}^d \sqrt{\lambda_i}}\, v_i \otimes v_i, U^*E^a_sU,F^b_t)$ of $P$ still in local dimension $d$.
For the simplicity of notation we rename $U^*E^a_sU$ as $E^a_s$.
Then we define the matrices
\[
K = \sum_{i=1}^d \sqrt{\lambda_i} \, v_iv_i^*, \quad X_s^a = K^{1/2} E_s^a K^{1/2}, \quad Y_t^b = K^{1/2} F_t^b K^{1/2}.
\]
By using the identities $\mathrm{vec}(K) = \psi$ and 
\[
\mathrm{vec}(K)^* (E_s^a \otimes F_t^b) \mathrm{vec}(K) = \Tr(K E_s^a K F_t^b) = \Tr(K^{1/2} E_s^a K^{1/2} K^{1/2} F_t^b K^{1/2}),
\]
we see that 
\begin{equation}
\label{sync1}
P(a,b|s,t) = \langle X_s^a, Y_t^b \rangle \quad \text{for all} \quad a,b,s,t,
\end{equation}
and
\begin{equation}
\label{sync2}
\langle K, K \rangle = 1, \quad \sum_a X_s^a = \sum_b Y_t^b = K \quad \text{for all} \quad s, t.
\end{equation}

For each $s$, by applying twice the Cauchy--Schwarz inequality gives
\begin{align*}
1 &= \sum_a P(a,a|s,s) = \sum_a \langle X_s^a, Y_s^a \rangle \leq \sum_a \langle X_s^a, X_s^a\rangle^{1/2} \langle Y_s^a, Y_s^a\rangle^{1/2} \\
&\leq \Big( \sum_a \langle X_s^a, X_s^a \rangle \Big)^{1/2} \Big( \sum_a \langle Y_s^a, Y_s^a \rangle \Big)^{1/2}\\
&\leq \Big\langle \sum_a X_s^a, \sum_a X_s^a \Big\rangle^{1/2} \Big\langle \sum_a Y_s^a, \sum_a Y_s^a \Big\rangle^{1/2} = \langle K, K\rangle = 1.
\end{align*}
Thus all inequalities above are equalities. The first inequality being an equality shows that there exist scalars $\alpha_{s,a}$ such that $X_s^a = \alpha_{s,a} Y_s^a$ for all $a,s$. The second inequality being an equality  shows that there exist scalars $\beta_s$ such that $\|X_s^a\| = \beta_s \|Y_s^a\|$ for all $a,s$. Hence,
\[
\beta_s \|Y_s^a\| = \|X_s^a\| = \|\alpha_{s,a} Y_s^a\| = \alpha_{s,a} \|Y_s^a\| = \alpha_{s,a} \|Y_s^a\| \quad \text{for all} \quad s,a,
\]
which shows $X_s^a = \beta_s Y_s^a$ for all $s$. Since $\sum_a X_s^a = K = \sum_a Y_s^a$, we have $\beta_s = 1$ for all $s$. Thus $X_s^a = Y_s^a$ for all $a,s$.
Therefore, 
\[
(A_P)_{(s,a), (t,b)} = \langle X_s^a, X_t^b \rangle \quad \text{for all} \quad a,b,s,t,
\]
which shows $\cpsdr_\C(A_P) \leq d$.

For the other direction we suppose $\{X_s^a\}$ are 
Hermitian positive semidefinite matrices with the smallest possible size such that $(A_P)_{(s,a),(t,b)} = \langle X_s^a, X_t^b \rangle$ for all $a,s,t,b$. Then,
\[
1 = \sum_{a,b} P(a,b|s,t) = \sum_{a,b} \langle X_s^a, X_t^b \rangle = \Big\langle \sum_a X_s^a, \sum_b X_t^b\Big\rangle \quad \text{for all} \quad s,t,
\]
which shows the existence of a matrix $K$ such that  $K = \sum_a X_s^a$ for all $s$. We have $\langle K, K \rangle = 1$ so that $\mathrm{vec}(K)$ is a unit vector, and since the factorization is smallest possible, $K$ is invertible. Set $E_s^a = K^{-1/2} X_s^a K^{-1/2}$ for all $s,a$, so that $\sum_a E_s^a = I$ for all $s$. Then,
\[
P(a,b|s,t) = (A_P)_{(s,a),(t,b)} = \langle X_s^a, X_t^b \rangle = \mathrm{vec}(K)^* (E_s^a \otimes E_t^b) \mathrm{vec}(K),
\]
which shows $P$ has a realization of local dimension $\cpsdr_\C(A_P)$.
\end{proof}

\end{document}